\newtheorem{thm}{Theorem}
\newtheorem{lem}[thm]{Lemma}
\newtheorem{cor}[thm]{Corollary}
\newtheorem{prop}[thm]{Proposition}
\newtheorem{conj}[thm]{Conjecture}
\newtheorem{ques}[thm]{Question}
\theoremstyle{definition}
\newtheorem{defn}[thm]{Definition}
\newtheorem{rem}[thm]{Remark}
\newtheorem{exa}[thm]{Example}
\numberwithin{thm}{section}
\newfont{\cyrr}{wncyr10}
\def\Sh{\mbox{\cyrr Sh}}
\def\Z{\mathbf{Z}}
\def\Q{\mathbf{Q}}
\def\F{\mathbf{F}}
\def\R{\mathbf{R}}
\def\Zp{\Z_p}
\def\Ql{\Q_\ell}
\def\Fp{\F_p}
\def\E{\tilde{E}}
\def\cE{\mathcal{E}}
\def\O{\mathcal{O}}
\def\cC{\mathcal{C}}
\def\cN{\mathcal{N}}
\def\J{{J}}
\def\l{\mathfrak{l}}
\def\q{\mathfrak{q}}
\def\n{\mathfrak{n}}
\def\p{\mathfrak{p}}
\def\Hom{\mathrm{Hom}}
\def\Gal{\mathrm{Gal}}
\def\cork{\mathrm{corank}}
\def\rk{\mathrm{rank}}
\def\ord{\mathrm{ord}}
\def\Aut{\mathrm{Aut}}
\def\Sel{\mathrm{Sel}}
\def\Frob{\mathrm{Frob}}
\def\ur{\mathrm{ur}}
\def\ST{\Sh\mathrm{T}}
\def\image{\mathrm{image}}
\def\Spec{\mathrm{Spec}}
\def\Tr{\mathrm{Trace}}
\def\ord{\mathrm{ord}}
\def\too{\longrightarrow}
\def\map#1{\;\xrightarrow{#1}\;}
\def\isom{\xrightarrow{\sim}}
\def\hookto{\hookrightarrow}
\def\onto{\twoheadrightarrow}
\def\dirsum#1{\underset{#1}{\textstyle\bigoplus}}
\def\bmu{\boldsymbol{\mu}}
\def\Xset{\mathcal{X}}
\def\H{\mathcal{H}}
\def\Hfppf{H_{\mathrm{fppf}}}
\title{Selmer companion curves}
\author{Barry Mazur}
\address{Department of Mathematics, 
Harvard University,
Cambridge, MA 02138, 
USA}
\email{\href{mailto:mazur@math.harvard.edu}{mazur@math.harvard.edu}}
\author{Karl Rubin}
\address{Department of Mathematics, 
UC Irvine,
Irvine, CA 92697, 
USA}
\email{\href{mailto:krubin@math.uci.edu}{krubin@math.uci.edu}}
\thanks{This material is based upon work supported by the 
National Science Foundation under grants DMS-1065904 and DMS-0968831.}
\begin{document}

\begin{abstract}
We say that two elliptic curves $E_1, E_2$ over a number field $K$ are 
{\em $n$-Selmer companions} for a positive integer $n$ if for every quadratic character 
$\chi$ of $K$, there is an isomorphism $\Sel_n(E_1^\chi/K) \cong \Sel_n(E_2^\chi/K)$ 
between the $n$-Selmer groups of the quadratic twists $E_1^\chi$, $E_2^\chi$.
We give sufficient conditions for two elliptic curves to be $n$-Selmer companions, 
and give a number of examples of non-isogenous pairs of companions.
\end{abstract}

\maketitle

\section{Introduction}

There has recently been much interest in the relative densities of $p$-Selmer ranks of 
elliptic curves in families consisting of all quadratic twists of a given elliptic curve 
$E$ over a fixed number field $K$. There are 
\begin{itemize} 
\item 
conjectures about these relative densities \cite{poonenrains},
\item 
some theorems for $p=2$  for elliptic curves over $K=\Q$ with full $2$-torsion rational 
\cite{heathbrown,swinnertondyer} 
\item  
some theorems for $p=2$ for elliptic curves over arbitrary number fields $K$ 
with other restrictions on the rational $2$-torsion \cite{klagsbrun,kmr}
\end{itemize}
The results here are still fragmentary, but the general sense of these conjectures and results 
is that the relative densities of the sets of characters $\chi$ such that $E^{\chi}$ has a fixed 
$p$-Selmer rank (the characters being ordered appropriately \dots) are expected to 
depend only on a few of the basic features of the elliptic curve $E$ and number field $K$.
For example, if $p=2$ and $\Gal(K(E[2])/K) \cong S_3$, then 
these densities depend only on a single parameter, a rational number $\delta(E,K) \in [0,1]$ 
that we call the {\em disparity} \cite{kmr}.  Moreover, if $K$ has a real embedding then $\delta(E,K)=1/2$, 
so  for such $E$ and $K$ these densities are absolute constants.

At the present we have no results at all giving rank densities of
$p$-Selmer groups of quadratic twist families when $p > 2$.  For example,
fix a number field $K$, a prime number $p$, 
and an elliptic curve $E$ over $K$.  We expect that the ranks of the $p$-Selmer groups 
in the family of all quadratic twists of $E$ over $K$ are unbounded, but we can
prove this for no triple $(K, p, E)$ with $p > 2$.  Nonetheless, we have precise expectations 
\cite{poonenrains} for the statistics governing $p$-rank densities in this context.

In view of this type of constancy of densities of Selmer ranks, let us consider the 
Selmer rank function itself, rather than its statistics, and formulate the following 
inverse question.   
For every positive integer $n$, let 
$\Sel_n(E/K)$ denote the $n$-Selmer group of $E$ over $K$
(see Definition \ref{seldef}), and for every quadratic character 
$\chi$ of $K$, we denote by $E^\chi$ the quadratic twist of $E$ by $\chi$.

\begin{ques}
For a fixed prime $p$ and number field $K$, how much information 
about $E$ can be read off from the function $\chi \mapsto \dim_{\Fp}\hskip-2.5pt\Sel_p(E^\chi/K)$?  
\end{ques}

This leads us to the following definition.

\begin{defn}
We will say that two elliptic curves $E_1, E_2$ over $K$ are 
{\em $n$-Selmer companions} if for every quadratic character $\chi$ of $K$ 
there is a group isomorphism
$$
\Sel_n(E_1^\chi/K) \cong \Sel_n(E_2^\chi/K).
$$ 
\end{defn}

If $n = \prod p_i^{k_i}$, then $E_1$ and $E_2$ are $n$-Selmer companions if and only 
if they are $p_i^{k_i}$-Selmer companions for every $i$.  For this reason we will 
restrict in what follows to the case where $n$ is a prime power.

If $E_1$ and $E_2$ are isogenous over $K$ by an isogeny of prime degree $\ell$, 
then so are $E_1^\chi$ and $E_2^\chi$ for every $\chi$, and 
the induced maps $\Sel_n(E_1^\chi/K) \to \Sel_n(E_1^\chi/K)$ are isomorphisms for 
every $n$ prime to $\ell$.  Thus in this case $E_1$ and $E_2$ are $n$-Selmer companions 
for every $n$ prime to $\ell$.  For this reason we are mainly 
interested in non-isogenous $n$-Selmer companions, although the case of isogenies 
of degree dividing $n$ is still interesting.

Our main result is Theorem \ref{comp} below, which gives sufficient 
conditions for a pair of curves to be $p^k$-Selmer companions for a prime power $p^k$.  

For example, we will show (see \S\ref{exas}) that the following 
pairs of non-isogenous curves, given with their labels from \cite{cremona}, are 
$p^k$-Selmer companions for $p^k$ as listed, over every number field $K$ (and 
in particular over $\Q$):
$$
\renewcommand\arraystretch{1.2}
\begin{array}{|c|c|l|}
\hline
~p^k~ & \text{label} & \text{curve} \\
\hline
\hline
\multirow{2}{5pt}{4}
  & 1242L1 &y^2+xy+y = x^3-x^2+x+1 \\
  & 1242K1 &y^2+xy+y = x^3-x^2-1666739-2448131309 \\
\hline
\hline
\multirow{2}{5pt}{5}
  & 676B1 &y^2 = x^3 + x^2 - 4x - 12 \\
  & 676E1 &y^2 = x^3 - 28561x + 1856465 \\
\hline
\hline
\multirow{2}{5pt}{7}
  & 1026N1 &y^2 + xy + y = x^3 - x^2 - 34601x - 2468631 \\
  & 1026O1 &y^2 + xy + y = x^3 - x^2 - 4241x + 107353 \\
\hline
\hline
\multirow{2}{5pt}{9}
  & 6555D1 &y^2 + y = x^3 - x^2 + 59335x + 3888371 \\
  & 6555E1 &y^2 + y = x^3 - x^2 - 33735977475x - 2384987222304844 \\
\hline
\end{array} 
$$

A natural open problem is to give necessary and sufficient conditions for $p$-companionship, 
with a prime $p$.  
We have not found any example of a pair of $p$-Selmer companions having different conductors, 
or having different Galois action on their $p$-torsion points.  In \S\ref{converse} 
we discuss some related questions, including partial converses to Theorem \ref{comp}.

It also seems natural to expect that a given elliptic curve can have only finitely many 
$p$-Selmer companions, even if we allow the prime $p$ to vary.  It is not difficult to show that 
for a given $E_1/K$ and fixed $p$, there are only finitely many $E_2/K$ satisfying the hypotheses of 
Theorem \ref{comp}, because all such curves must have the same set of primes of bad reduction.
See Proposition \ref{justfin}.

Finally, much of what we do in this paper applies more generally to the Bloch-Kato Selmer groups 
\cite{BlochKato} attached to a motive.  The subtlest problem in the general case, as in the case 
of elliptic curves, is to understand the local condition at $p$ in the definition of the 
Bloch-Kato $p^k$-Selmer group, and how it changes under twists.  
It would be of interest to try to express that local condition only in
terms of finite congruence information related to the motive.

\subsection*{Notation}
If $K$ is a field, we let $\bar{K}$ denote a fixed algebraic closure of $K$, 
and $G_K = \Gal(\bar{K}/K)$.  Let $\Xset(K) := \Hom(G_K,\{\pm1\})$ be the 
group of quadratic characters of $K$.

If $E$ is an elliptic curve over $K$ and $m$ is a positive integer, then 
$E[m]$ will denote the kernel of multiplication by $m$ on $E$.

If $K$ is a local field and $E$ is an elliptic curve over $K$, then 
we will say that $E$ has potentially good reduction if  
$E$ has good reduction over some finite extension $F$ of $K$, 
{\em including} the case where $E$ has good reduction over $K$, i.e., $F = K$.
Similarly we will say that $E$ has potentially multiplicative reduction if  
$E$ has multiplicative reduction over a finite extension $F$ of $K$, 
including the case where $E$ has multiplicative reduction over $K$.

\section{Elliptic curves over local fields}

For this section suppose that $K$ is a finite extension of $\Ql$ 
for some rational prime $\ell$, 
and $E$ is an elliptic curve over $K$ with potentially multiplicative reduction.
Let $\ord_K : K^\times \onto \Z$ denote the valuation on $K$.

\begin{defn}
\label{spldef}
There is a unique character $\psi = \psi_{E/K}\in\Xset(K)$ such that the twist $E^\psi$
has split multiplicative reduction (see for example \cite[\S1.12]{serre1972}).  
We will call $\psi$ the {\em splitting character} of $E/K$.

The next proposition is due to Tate \cite[Theorem 1]{tatemr}.
\end{defn}

\begin{prop}[Tate \cite{tatemr}]
\label{tateunif}
There is a Tate parameter $q = q_E \in K^\times$ with $\ord_K(q) < 0$, and an isomorphism 
$$
\tau_{E/K} : \bar{K}^\times/q^\Z \isom E(\bar{K}).
$$
If $E$ has split multiplicative reduction, then $\tau_{E/K}$ is 
$G_{K}$-equivariant. 
In general, we have 
$$
\tau_{E/K}(u^\sigma) = \psi(\sigma)\tau_{E/K}(u)^\sigma
$$
for every $\sigma\in G_{K}$, 
where $\psi$ is the splitting character of $E/K$.
\end{prop}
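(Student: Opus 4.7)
The plan is to reduce to the split multiplicative case via the splitting character and then invoke Tate's classical uniformization as the substantive input. Since $E$ has potentially multiplicative reduction, Definition \ref{spldef} supplies a unique quadratic character $\psi = \psi_{E/K}$ such that the twist $E^\psi$ has split multiplicative reduction over $K$.

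In the split case I would invoke Tate's theorem for $E^\psi$ (taking it as given from \cite{tatemr}): there exist a Tate parameter $q = q_{E^\psi} \in K^\times$ of the appropriate valuation and a $G_K$-equivariant isomorphism $\tau_{E^\psi/K} : \bar K^\times/q^\Z \isom E^\psi(\bar K)$. Set $q_E := q_{E^\psi}$; note that it depends only on $j(E)=j(E^\psi)$ and is therefore intrinsic to $E$. To descend to $E$, fix an isomorphism $\phi : E^\psi \to E$ defined over $\bar K$ (unique up to the sign $\pm 1 \in \Aut(E)$). The cocycle description of the quadratic twist reads $\phi^\sigma = \psi(\sigma)\phi$, equivalently
$$
\sigma(\phi(P)) = \psi(\sigma)\,\phi(\sigma(P)) \quad \text{for all } P \in E^\psi(\bar K),\ \sigma \in G_K,
$$
where $\psi(\sigma) \in \{\pm 1\}$ acts on $E$ via the group law. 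Define $\tau_{E/K} := \phi \circ \tau_{E^\psi/K}$. Combining the twist formula with the $G_K$-equivariance of $\tau_{E^\psi/K}$ yields
$$
\sigma\bigl(\tau_{E/K}(u)\bigr) = \sigma\bigl(\phi(\tau_{E^\psi/K}(u))\bigr) = \psi(\sigma)\,\phi\bigl(\tau_{E^\psi/K}(u^\sigma)\bigr) = \psi(\sigma)\,\tau_{E/K}(u^\sigma),
$$
which rearranges to the stated identity (using $\psi(\sigma)^{-1} = \psi(\sigma)$).

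The only genuine obstacle is Tate's uniformization in the split case itself; the descent via $\psi$ is formal bookkeeping with twist cocycles. One sanity check I would perform is the sign convention on $\ord_K(q)$: Tate's standard normalization gives $|q|<1$ and hence $\ord_K(q)>0$, so I would want to confirm that the inequality $\ord_K(q)<0$ in the statement reflects the paper's convention elsewhere (or is a typo) before using it downstream.
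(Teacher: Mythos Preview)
The paper does not give a proof of this proposition: it is stated as a result of Tate, with the attribution ``due to Tate \cite{tatemr}'' and no argument supplied. So there is no proof in the paper to compare your proposal against.

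Your reduction is correct and standard: twist by $\psi$ to land in the split case, cite Tate's uniformization there, and transport back via the twist isomorphism $\phi$ using the cocycle relation $\phi^\sigma = \psi(\sigma)\phi$. This is exactly how one derives the general formula from the split case, and the computation you wrote is clean.

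Your sanity check on the sign of $\ord_K(q)$ is well-founded and in fact catches a typo. The standard Tate normalization has $|q|<1$, i.e.\ $\ord_K(q)>0$, and the paper itself uses this later: in the proof of Lemma~\ref{sameC} it writes $\ord_K(q_1) = -\ord_K(j(E_1))$, and since potentially multiplicative reduction means $\ord_K(j(E_1))<0$, this forces $\ord_K(q_1)>0$. So the inequality $\ord_K(q)<0$ in the statement should read $\ord_K(q)>0$.
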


Let $\bmu_m$ denote the group of $m$-th roots of unity in $\bar{K}$, and 
$$
\epsilon_m : G_K \to \Aut(\bmu_m) \isom (\Z/m\Z)^\times
$$
the mod $m$ cyclotomic character.

\begin{defn}
For $m > 0$, 
define the canonical subgroup 
$$
\cC_{E/K}[m] := \tau_{E/K}(\bmu_m) \subset E[m].
$$
\end{defn}

\begin{lem}
\label{Cchar}
There is a $G_K$-isomorphism $\cC_{E/K}[m] \cong \bmu_m \otimes \psi_{E/K}$, i.e.,
$\cC_{E/K}[m]$ is a cyclic group of order $m$ on which $G_K$ acts by the character 
$\epsilon_m \psi_{E/K}$.
\end{lem}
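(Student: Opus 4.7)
The plan is to unpack Proposition \ref{tateunif} by evaluating the Galois action explicitly on roots of unity and then translating back through $\tau_{E/K}$.

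First, I would verify the cardinality claim. Since $\ord_K(q) < 0$, every nontrivial root of unity in $\bar{K}^\times$ has valuation $0$ and hence lies outside $q^\Z$; in particular the composition $\bmu_m \hookto \bar{K}^\times \onto \bar{K}^\times/q^\Z$ is injective. Since $\tau_{E/K}$ is an isomorphism, $\cC_{E/K}[m] = \tau_{E/K}(\bmu_m)$ is a cyclic subgroup of $E[m]$ of order $m$.

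Next, I would compute the Galois action. Fix $\zeta \in \bmu_m$ and $\sigma \in G_K$. By definition of the cyclotomic character, $\zeta^\sigma = \zeta^{\epsilon_m(\sigma)}$. Applying Proposition \ref{tateunif} with $u = \zeta$ gives
\[
\tau_{E/K}(\zeta)^\sigma \;=\; \psi(\sigma)^{-1}\,\tau_{E/K}(\zeta^\sigma) \;=\; \psi(\sigma)\,\tau_{E/K}(\zeta^{\epsilon_m(\sigma)}),
\]
where I used $\psi(\sigma) = \pm 1$ so that $\psi(\sigma)^{-1} = \psi(\sigma)$. Because $\tau_{E/K}$ is a group homomorphism, $\tau_{E/K}(\zeta^{\epsilon_m(\sigma)}) = \epsilon_m(\sigma)\cdot\tau_{E/K}(\zeta)$ (scalar multiplication on the elliptic curve). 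Hence
\[
\tau_{E/K}(\zeta)^\sigma \;=\; \epsilon_m(\sigma)\psi(\sigma)\cdot\tau_{E/K}(\zeta),
\]
which is exactly the statement that $G_K$ acts on $\cC_{E/K}[m]$ by the character $\epsilon_m\psi_{E/K}$, i.e., $\cC_{E/K}[m] \cong \bmu_m \otimes \psi_{E/K}$ as $G_K$-modules.

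There is essentially no obstacle here beyond a careful bookkeeping of the twist relation in Proposition \ref{tateunif}; the only point to keep straight is that $\psi(\sigma)^{-1} = \psi(\sigma)$ for a quadratic character, so both sign conventions agree, and that the exponent $\epsilon_m(\sigma)$ in $\bar K^\times$ becomes scalar multiplication by $\epsilon_m(\sigma)$ on the elliptic-curve side via the homomorphism property of $\tau_{E/K}$.
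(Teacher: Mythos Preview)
Your proof is correct and is exactly the unpacking of the paper's one-line proof ``This is immediate from Proposition~\ref{tateunif}.'' You have simply made explicit the cardinality count and the Galois-action computation that the paper leaves to the reader.
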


\begin{proof}
This is immediate from Proposition \ref{tateunif}.
\end{proof}

Let $j(E)$ denote the $j$-invariant of $E$.

\begin{lem}
\label{sameC}
Suppose $E_1, E_2$ are elliptic curves over $K$ with potentially multiplicative reduction, 
$p$ is a prime, and $p \nmid \ord_K(j(E_1))$.
Suppose further that either 
$p \ne 2$, 
or 
$E_1$ and $E_2$ have multiplicative reduction.

Then for every $k \ge 1$ and every $G_K$-isomorphism $\lambda : E_1[p^k] \isom E_2[p^k]$, we have
$$\lambda(\cC_{E_1/K}[p^k]) = \cC_{E_2/K}[p^k].$$
\end{lem}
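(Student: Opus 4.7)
The plan is to establish the stronger statement that $\cC_{E_1/K}[p^k]$ is the \emph{unique} cyclic $G_K$-stable subgroup of $E_1[p^k]$ of order $p^k$. Granting this, the subgroup $\lambda^{-1}(\cC_{E_2/K}[p^k])$ of $E_1[p^k]$ is cyclic, $G_K$-stable, and of order $p^k$, so it must equal $\cC_{E_1/K}[p^k]$; applying $\lambda$ then yields the desired conclusion.

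To prove the uniqueness, I would use Proposition \ref{tateunif} to fix an explicit $\Z/p^k\Z$-basis $\{C,P\}$ of $E_1[p^k]$, with $C=\tau_{E_1/K}(\zeta)$ (for $\zeta$ a primitive $p^k$-th root of unity) generating $\cC_{E_1/K}[p^k]$ and $P=\tau_{E_1/K}(\pi)$ for some $p^k$-th root $\pi$ of $q_{E_1}$. The twist formula in Proposition \ref{tateunif}, together with Lemma \ref{Cchar}, puts the Galois action in upper-triangular form
$$
\sigma \;\longmapsto\; \begin{pmatrix} \epsilon_{p^k}(\sigma)\psi_{E_1}(\sigma) & \psi_{E_1}(\sigma)\,a(\sigma) \\ 0 & \psi_{E_1}(\sigma) \end{pmatrix},
$$
where $a\in Z^1(G_K,\bmu_{p^k})$ is the Kummer cocycle of $\pi$ and its class in $H^1(G_K,\bmu_{p^k})\cong K^\times/(K^\times)^{p^k}$ is the class of $q_{E_1}$. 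The hypothesis $p\nmid \ord_K(j(E_1))$ translates to $p\nmid \ord_K(q_{E_1})$, so $q_{E_1}$ is not even a $p$-th power in $K^\times$.

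Next, I would parametrize the cyclic order-$p^k$ subgroups of $E_1[p^k]$ by $\rP^1(\Z/p^k\Z)$ and, for a generator $bP+cC$ with $(b,c)$ primitive, impose the stability condition $\sigma(bP+cC)=\eta(\sigma)(bP+cC)$ for some character $\eta\colon G_K\to(\Z/p^k\Z)^\times$. Jordan--H\"older forces $\eta\in\{\psi_{E_1},\epsilon_{p^k}\psi_{E_1}\}$. Comparing $P$- and $C$-coefficients produces two equations whose combination forces the reduction mod $p^{k-v_p(b)}$ of $[a]\in H^1(G_K,\bmu_{p^k})$ (up to a coboundary, depending on the choice of $\eta$) to vanish; via Kummer theory this translates into $q_{E_1}$ being a $p^{k-v_p(b)}$-th power in $K^\times$ up to an element of $\bmu_{p^{v_p(b)}}(K)$. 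Since roots of unity have trivial valuation, the hypothesis $p\nmid\ord_K(q_{E_1})$ rules this out whenever $v_p(b)<k$, leaving $b=0$ as the only possibility; this corresponds precisely to $\cC_{E_1/K}[p^k]$.

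The main obstacle is the case $p=2$. When $p$ is odd, $\epsilon_{p^k}$ generically has image strictly larger than $\{\pm 1\}$, which lets the $P$-coefficient comparison cleanly pin $\eta$ down to one of $\psi_{E_1}$ or $\epsilon_{p^k}\psi_{E_1}$; for $p=2$ both candidates factor through $\{\pm 1\}$ and several characters become indistinguishable modulo low powers of $2$, potentially allowing additional stable subgroups. The supplementary hypothesis that $E_1$ and $E_2$ both have multiplicative reduction when $p=2$ makes $\psi_{E_1}$ unramified, and one can then restrict the entire cocycle analysis to the inertia subgroup of $G_K$, where $\psi_{E_1}$ becomes trivial and the Kummer argument proceeds as in the split-multiplicative case. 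Carefully tracking the interaction of $\psi_{E_1}$, $\epsilon_{2^k}$, and $a$ modulo small powers of $2$ is the main technical work, but no new ideas beyond the Kummer-theoretic core are required.
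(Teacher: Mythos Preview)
Your central claim---that $\cC_{E_1/K}[p^k]$ is the \emph{unique} cyclic $G_K$-stable subgroup of $E_1[p^k]$ of order $p^k$---is false for $k\ge 2$, so the strategy breaks down. For a concrete counterexample take $p$ odd, $k=2$, a field $K\supset\bmu_{p^2}$, and $E_1/K$ with split multiplicative reduction and $p\nmid\ord_K(q_{E_1})$. In your basis $\{C,P\}$ the Galois action is $\sigma\mapsto\bigl(\begin{smallmatrix}1&a(\sigma)\\0&1\end{smallmatrix}\bigr)$ with $a$ surjective onto $\Z/p^2\Z$, and one checks directly that the cyclic subgroup $\langle pP+C\rangle$ of order $p^2$ is $G_K$-stable, with $\sigma(pP+C)=\eta(\sigma)(pP+C)$ for $\eta(\sigma)=1+pa(\sigma)$; yet this subgroup is visibly different from $\cC_{E_1/K}[p^2]=\langle C\rangle$. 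The specific error is the Jordan--H\"older step: over $\Z/p^k\Z$ the simple composition factors live over $\F_p$, so Jordan--H\"older only forces $\eta\equiv\psi_{E_1}$ or $\epsilon_{p^k}\psi_{E_1}\pmod{p}$, not equality in $(\Z/p^k\Z)^\times$. In the counterexample $\eta=1+pa$ is congruent to $1\pmod p$ but is not the trivial character, and your coefficient comparison collapses to the vacuous relation $p\cdot a\equiv 0\pmod p$ rather than forcing $[a]\bmod p$ to vanish.

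The paper avoids uniqueness altogether. It introduces the set
\[
B_i=\{z^\sigma-z:z\in E_i[p^k],\ \sigma\in G_K,\ \psi_1(\sigma)=\psi_2(\sigma)=1\},
\]
defined purely in terms of the Galois action together with the splitting characters of \emph{both} curves, so that $\lambda(B_1)=B_2$ is automatic from $G_K$-equivariance of $\lambda$. The substance of the argument is then to show $B_i=\cC_{E_i/K}[p^k]$: the inclusion $B_i\subset\cC_{E_i/K}[p^k]$ comes from the twisted Tate uniformization, and the hypotheses (``$p>2$'' or ``multiplicative reduction'') are used exactly to exhibit a $\sigma\in\ker\psi_1\cap\ker\psi_2$ that moves a $p^k$-th root of $q_{E_1}$ by a primitive $p^k$-th root of unity, so that $B_1$ contains a generator of $\cC_{E_1/K}[p^k]$. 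Your approach is correct only for $k=1$, where cyclic order-$p$ subgroups are genuine eigenlines and the Jordan--H\"older constraint is sharp; for general $k$ you would need to replace ``unique stable cyclic subgroup'' by an intrinsic description in the spirit of the paper's $B_i$.
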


\begin{proof}
Let $q_i$ be the Tate parameter of $E_i/K$, and $\psi_i$ 
the splitting character.  
Let $\tau_i = \tau_{E_i/K}$ be the map of Proposition \ref{tateunif}.  

Fix a $G_K$-isomorphism $\lambda : E_1[p^k] \isom E_2[p^k]$.
By Proposition \ref{tateunif}, for every $\sigma \in G_K$ and $\beta \in (\bar{K}^\times/q_i^\Z)[p^k]$, 
we have 
\begin{equation}
\label{bidef}
\psi_i(\sigma)\tau_i(\beta)^\sigma - \tau_i(\beta) = \tau_i(\beta^\sigma/\beta) \in \tau_i(\bmu_{p^k}) = \cC_{E_i/K}[p^k].
\end{equation}
Define 
$$
B_i = \{z^\sigma - z : \text{$z \in E_i[p^k]$, $\sigma \in G_K$, and $\psi_1(\sigma) = \psi_2(\sigma) = 1$} \}.
$$ 
Then by \eqref{bidef} we have $B_i \subset \cC_{E_i/K}[p^k]$, and since $\lambda$ is $G_K$-equivariant 
we have $\lambda(B_1) = B_2$.  

Choose $\gamma \in \bar{K}$ with $\gamma^p = q_1$.  
We have $\ord_K(q_1) = -\ord_K(j(E_1))$, so $\ord_K(q_1)$ is prime to $p$ and 
$[K(\gamma):K] = p$.  
We will show that there is a $\sigma \in G_K$ such that $\gamma^\sigma \ne \gamma$ and 
$\psi_1(\sigma) = \psi_2(\sigma) = 1$.

\medskip\noindent{\em Case 1: $E_1$ and $E_2$ have multiplicative reduction.}
Since $\ord_K(q_1)$ is prime to $p$, $K(\gamma)/K$ is ramified, so there is a $\sigma$ 
in the inertia group $I_K \subset G_K$ such that $\gamma^\sigma \ne \gamma$.  If 
$E_1$ and $E_2$ have multiplicative reduction, then $\psi_1$ and $\psi_2$ are unramified, 
so $\psi_1(\sigma) = \psi_2(\sigma) = 1$.  

\medskip\noindent{\em Case 2: $p > 2$.}
Since $[K(\gamma):K] = p$, we have $\gamma \notin K(\bmu_p)$.  
Thus we can choose $\sigma_0 \in G_{K(\bmu_p)}$ such that $\gamma^{\sigma_0} \ne \gamma$. 
Let $\sigma = \sigma_0^2$.   
Since $p \ne 2$ and $\sigma_0$ fixes $\gamma^{\sigma_0}/\gamma \in \bmu_p$, 
we have that $\gamma^{\sigma}/\gamma = (\gamma^{\sigma_0}/\gamma)^2  \ne 1$.
Since $\psi_1$ and $\psi_2$ are quadratic characters, we also have 
$\psi_1(\sigma) = \psi_2(\sigma) = 1$.

\medskip
Now choose $\beta \in \bar{K}$ with $\beta^{p^k} = q_1$.  Then $\beta^\sigma/\beta$ 
is a primitive $p^k$-th root of unity, so $\tau_1(\beta^\sigma/\beta)$ generates 
$\cC_{E_1/K}[p^k]$, and by \eqref{bidef} we have that $\tau_1(\beta^\sigma/\beta) \in B_1$.
Therefore $B_1 = \cC_{E_1/K}[p^k]$, so 
$$
\lambda(\cC_{E_1/K}[p^k]) = \lambda(B_1) = B_2 \subset \cC_{E_2/K}[p^k].
$$ 
The final inclusion must be an equality because both groups have order $p^k$ by Lemma \ref{Cchar}.
\end{proof}

\section{Main theorem}

Suppose for this section that $K$ is a number field.  
Fix a prime $p$ and a power $p^k$ of $p$, $k \ge 1$.
Our main result is the following.

\begin{thm}
\label{comp}
Suppose $E_1$ and $E_2$ are elliptic curves over $K$.  Let 
$S_i$ be the set of primes of $K$ where $E_i$ has 
potentially multiplicative reduction.
Let $m = p^{k+1}$ if $p \le 3$, and $m = p^k$ if $p>3$.  
Suppose further that:
\begin{enumerate}
\item
there is a $G_K$-isomorphism $E_1[m] \cong E_2[m]$,
\item
$S_1 = S_2$, 
\item
for all $\l \in S_1 = S_2$, the isomorphism of (i) sends $\cC_{E_1/K_\l}[m]$ to $\cC_{E_2/K_\l}[m]$,
\item
for every $\p$ of $K$ above $p$, either 
\begin{itemize}
\item
$\p \in S_1 = S_2$, or
\item
$k=1$, $E_1$ and $E_2$ have good reduction at $\p$, and the ramification degree $e(\p/p)$ is less than $p-1$.
\end{itemize}
\end{enumerate}
Then for every finite extension $F$ of $K$, and every $\chi \in \Xset(F)$, 
there is a canonical isomorphism 
$$
\Sel_{p^k}(E_1^\chi/F) \cong \Sel_{p^k}(E_2^\chi/F).
$$
In particular $E_1$ and $E_2$ are $p^k$-Selmer companions over every number field containing $K$.  
\end{thm}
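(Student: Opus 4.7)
The plan is to exhibit a $G_F$-equivariant isomorphism $\lambda : E_1^\chi[p^k] \isom E_2^\chi[p^k]$ and then to show that the induced map on global $H^1$ carries $\Sel_{p^k}(E_1^\chi/F)$ onto $\Sel_{p^k}(E_2^\chi/F)$. The map $\lambda$ comes from hypothesis (i) by restricting $G_K \to G_F$, reducing modulo $p^k$ (legal since $p^k \mid m$), and twisting by $\chi$, using that $E^\chi[m] \cong E[m] \otimes \chi$ as $G_F$-modules. Writing
$$
\mathcal{L}_w(E_i^\chi) := \image\bigl(E_i^\chi(F_w)/p^k E_i^\chi(F_w) \hookrightarrow H^1(F_w, E_i^\chi[p^k])\bigr)
$$
for the local Selmer condition at a place $w$ of $F$, the theorem reduces to checking $\lambda_*\bigl(\mathcal{L}_w(E_1^\chi)\bigr) = \mathcal{L}_w(E_2^\chi)$ at every $w$.

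For $w \nmid p$ there are two subcases. If $w$ does not lie over any $\l \in S_1 = S_2$, then both curves have potentially good reduction at $w$ and $\mathcal{L}_w$ equals the unramified subgroup $H^1_{\ur}(F_w, E_i^\chi[p^k])$, a subgroup defined purely in terms of the $G_{F_w}$-module and hence preserved by $\lambda_*$ automatically. If $w \nmid p$ lies above some $\l \in S_1$, Proposition \ref{tateunif} produces a $G_{F_w}$-equivariant short exact sequence
$$
0 \to \mathcal{C}_{E_i^\chi/F_w}[p^k] \to E_i^\chi[p^k] \to \mathbf{Z}/p^k\mathbf{Z} \to 0,
$$
and Kummer theory on the Tate curve identifies $\mathcal{L}_w(E_i^\chi)$ with the preimage in $H^1(F_w, E_i^\chi[p^k])$ of the unramified classes in $H^1(F_w, \mathbf{Z}/p^k\mathbf{Z})$. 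Hypothesis (iii), pulled back from $K$ to $F$ by restriction and preserved under twisting via Lemma \ref{Cchar}, guarantees that $\lambda$ identifies the canonical subgroups; a snake-lemma diagram chase then matches $\mathcal{L}_w$ on both sides.

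The main obstacle is places $w \mid p$, which is exactly where hypothesis (iv) intervenes. If $w$ lies over some $\p \in S_1 = S_2$, the Tate-uniformization analysis still applies, but now $H^1(F_w, \bmu_{p^k})$ has a large non-unramified part coming from $F_w^\times/(F_w^\times)^{p^k}$; one must check that $\lambda_*$ respects the whole Tate-curve filtration, which follows because $\lambda$ is induced from a global Galois-module map and thus commutes with the entire Tate-curve sequence. If instead $E_1, E_2$ have good reduction at the prime $\p$ of $K$ below $w$, hypothesis (iv) forces $k=1$ and $e(\p/p) < p-1$; here Raynaud's theorem on uniqueness of finite flat prolongations in this low-ramification range guarantees that the restriction $\lambda|_{G_{K_\p}}$ lifts uniquely to an isomorphism of finite flat group schemes $\mathcal{E}_1^\chi[p] \cong \mathcal{E}_2^\chi[p]$ over $\mathcal{O}_{K_\p}$. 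Base-changing this flat-scheme isomorphism to $\mathcal{O}_{F_w}$ and invoking the identification of $\mathcal{L}_w$ with $\image\bigl(\Hfppf^1(\mathcal{O}_{F_w}, \mathcal{E}_i^\chi[p]) \to H^1(F_w, E_i^\chi[p])\bigr)$ matches the local conditions on both sides. Descending to the base field $K_\p$ to apply Raynaud uniqueness, rather than working directly at $F_w$, is the key manoeuvre that lets one use $e(\p/p) < p-1$ despite the fact that $e(w/p)$ may be much larger.
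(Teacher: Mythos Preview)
Your overall strategy---identify the $H^1$'s via the given isomorphism and match local Kummer images place by place---is exactly the paper's. But two of your local claims are wrong, and the first is a genuine gap.

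\textbf{Places of bad, potentially good reduction.} You assert that for $w \nmid p$ not lying over $S_1 = S_2$, the local condition $\mathcal{L}_w(E_i^\chi)$ equals $H^1_{\ur}(F_w, E_i^\chi[p^k])$. This is true only when $E_i^\chi$ has \emph{good} reduction at $w$. If $E_i$ already has additive (potentially good) reduction at $w$, or if $E_i$ has good reduction but $\chi$ is ramified at $w$, then $E_i^\chi$ has additive reduction and the Kummer image is \emph{not} the unramified subgroup. The paper computes it directly (Lemma~\ref{12.2}): it vanishes if $p > 3$, while for $p \le 3$ it consists of the classes of cocycles $\sigma \mapsto t^\sigma - t$ with $t \in E_i^\chi[p^{k+1}]$ and $p^k t$ rational. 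This description depends on the $G_{F_w}$-module $E_i^\chi[p^{k+1}]$, not merely on $E_i^\chi[p^k]$, and is precisely why the hypotheses demand $m = p^{k+1}$ when $p \le 3$. Your argument never invokes that extra level of torsion, so it cannot be complete. (The same issue arises at archimedean places when $p = 2$, which you do not treat at all.) One also needs Lemma~\ref{samered} to know that $E_1$ and $E_2$ have the \emph{same} reduction type---good versus additive---at each such $w$, so that the two local conditions are being computed by the same recipe.

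\textbf{Potentially multiplicative places.} Your description of $\mathcal{L}_w$ as the preimage of $H^1_{\ur}(F_w,\Z/p^k\Z)$ is not correct in general: in the split case $\mathcal{L}_w$ is the full image of $H^1(F_w,\bmu_{p^k})$, i.e.\ the preimage of~$0$, and this is strictly smaller when $p \mid \ord_w(q)$. More seriously, the non-split case---especially $p=2$ with the Tate parameter a norm from the splitting field---is delicate and requires the explicit comparison of Tate parameters carried out in Lemmas~\ref{3.6} and~\ref{3.7}; a filtration diagram chase alone does not pin down $\mathcal{L}_w$ there.

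Your treatment of primes above $p$ via Raynaud is essentially right, but note that once $\chi$ is ramified the twist $E_i^\chi$ no longer has good reduction, so ``$\cE_1^\chi[p] \cong \cE_2^\chi[p]$ as finite flat group schemes'' is not meaningful; the paper instead passes to the quadratic extension cut out by $\chi$ and takes $(-1)$-eigenspaces under $\Gal(L/F_w)$.
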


A slightly stronger version of Theorem \ref{comp} will be proved in \S\ref{proofsect} below.
We first give some remarks, consequences and examples.

\begin{rem}
The proof of Theorem \ref{comp} will show that $\Sel_{p^k}(E_1^\chi/F)$, $\Sel_{p^k}(E_2^\chi/F)$ 
are actually equal inside $H^1(F,E_1^\chi[p^k]) = H^1(F,E_2^\chi[p^k])$, 
where we use the isomorphism $E_1[p^k] \cong E_2[p^k]$ to identify 
$H^1(F,E_1^\chi[p^k])$ with $H^1(F,E_2^\chi[p^k])$.
See Definition \ref{seldef} and the remarks following it.\end{rem}

\begin{rem}
\label{comp+}
A careful reading of the proof will show that when $p = 3$, if $E_1$ and $E_2$ have no primes of 
additive reduction of Kodaira type $\rm{II}$, $\rm{IV}$, $\rm{II}^*$, or $\rm{IV}^*$, 
then we can take $m = 3^k$ 
instead of $3^{k+1}$ in the hypotheses of Theorem \ref{comp}.  See Remark \ref{rem3.2}.
In particular if $E$ is semistable, we can take $m=3^k$ instead of $m=3^{k+1}$.
\end{rem}

\begin{cor}
Suppose $K'$ is a number field with a unique prime $\p$ above $p$.  
Suppose $E_1$ and $E_2$ are elliptic curves over $K'$ 
with potentially multiplicative reduction at $\p$, and with potentially 
good reduction at all primes different from $\p$.  
Let $m = p^{k+1}$ if $p \le 3$, and $m = p^k$ if $p>3$.
Then $E_1$ and $E_2$ are $p^k$-Selmer companions over every number field $K$ containing 
$K'(E_1[m],E_2[m])$.
\end{cor}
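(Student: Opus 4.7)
The plan is to apply Theorem~\ref{comp} with base field $K'' := K'(E_1[m], E_2[m])$. Since Theorem~\ref{comp} gives $p^k$-Selmer companionship over every finite extension of its base field, this yields the corollary over every $K \supseteq K''$, and the task is to verify the hypotheses (i)--(iv) of Theorem~\ref{comp} at $K''$.

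Conditions (i), (ii), and (iv) follow directly from the setup. By construction of $K''$, the group $G_{K''}$ acts trivially on both $E_1[m]$ and $E_2[m]$, so any abelian-group isomorphism between them is automatically $G_{K''}$-equivariant, giving (i). Potentially multiplicative reduction is preserved under base change, so $S_1 = S_2$ equals the set of primes of $K''$ above $\mathfrak{p}$, giving (ii). Finally, since $\mathfrak{p}$ is the unique prime of $K'$ above $p$, every prime of $K''$ above $p$ lies above $\mathfrak{p}$ and therefore belongs to $S_1$, giving (iv).

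The substantive step and main obstacle is (iii): one must produce a single isomorphism $\lambda \colon E_1[m] \isom E_2[m]$ sending $\mathcal{C}_{E_1/K''_\ell}[m]$ to $\mathcal{C}_{E_2/K''_\ell}[m]$ for every $\ell \in S_1$ simultaneously. When several primes of $K''$ lie above $\mathfrak{p}$, the canonical subgroups at distinct primes can be distinct subgroups of $E_i[m]$, and a single $\lambda$ must match all of them at once. Two structural inputs are available. First, because $E_i[m] \subset E_i(K'')$, the splitting character $\psi_{E_i/K''_\ell}$ is trivial at each $\ell$ above $\mathfrak{p}$, so both $E_1$ and $E_2$ acquire split multiplicative reduction over each $K''_\ell$ and by Lemma~\ref{Cchar} the canonical subgroup is canonically identified with $\mu_m$ via the Tate parametrization. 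Second, $\Gal(K''/K')$ acts transitively on the primes of $K''$ above $\mathfrak{p}$ and, by naturality of the Tate uniformization, carries canonical subgroups of each $E_i$ to canonical subgroups compatibly with its permutation of the primes.

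Given these inputs, the plan for constructing $\lambda$ is to fix one prime $\ell_0$ above $\mathfrak{p}$, choose $\lambda$ so as to identify the Tate-parametrization data at $\ell_0$---sending the $\mu_m$-part of $E_1[m]$ (the canonical subgroup at $\ell_0$) to that of $E_2[m]$ and matching the complementary part compatibly---and then use the Galois intertwining from the second input to verify that the same $\lambda$ automatically matches canonical subgroups at every $\sigma(\ell_0)$. The delicate point, and the main thing to check carefully, is precisely this intertwining step: that one can choose the initial $\lambda$ at $\ell_0$ so that its behaviour is compatible with the $\Gal(K''/K')$-action, so that the matching propagates globally. Once (iii) is established in this way, Theorem~\ref{comp} applies and yields the corollary.
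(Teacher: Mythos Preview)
Your plan matches the paper's: apply Theorem~\ref{comp} over $K'' = K'(E_1[m], E_2[m])$ and inherit the conclusion over every $K \supseteq K''$; conditions (i), (ii), (iv) are handled identically. For (iii), however, the paper is far more direct than you are: it simply fixes any group isomorphism $\lambda \colon E_1[m] \to E_2[m]$ sending $\cC_{E_1/K'_\p}[m]$ to $\cC_{E_2/K'_\p}[m]$---a condition imposed only at the \emph{single} prime $\p$ of $K'$---observes that $\lambda$ is automatically $G_K$-equivariant since $G_K$ acts trivially, and declares all hypotheses of Theorem~\ref{comp} satisfied. The implicit point is that every $\ell \in S_1 = S_2$ lies above $\p$, so $K_\ell \supseteq K'_\p$ and the canonical subgroup over $K_\ell$ is just the base change of the one over $K'_\p$; matching once at $\p$ is taken to cover condition (iii) at every such $\ell$.

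You have instead raised the concern that the several primes of $K''$ above $\p$ might pull back to genuinely different subgroups of the abstract group $E_i[m]$, related by the $\Gal(K''/K')$-action, so that a single $\lambda$ must match all of them simultaneously. Your proposed resolution via Galois transitivity would require $\lambda$ to intertwine the $\Gal(K''/K')$-action on these subgroups, and you flag this as ``the main thing to check carefully'' without carrying it out. That is a genuine gap in your write-up: the hypotheses of the corollary do not assume $E_1[m] \cong E_2[m]$ as $G_{K'}$-modules, and without that (or some substitute) there is no evident mechanism by which your propagation step succeeds. So while you have been more scrupulous than the paper in isolating the issue, your argument is incomplete at precisely the point you yourself identify as delicate; the paper, by contrast, treats the matter as immediate and does not pause over it.
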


\begin{proof}
Fix any group isomorphism $\lambda : E_1[m] \to E_2[m]$ that 
takes $\cC_{E_1/K'_\p}[m]$ to $\cC_{E_2/K'_\p}[m]$.  If $K$ 
is a field containing $K'(E_1[m],E_2[m])$, then $\lambda$ is $G_K$-equivariant 
because $G_K$ acts trivially on both sides.  We have $S_1 = S_2$ is the set of all 
primes of $K$ above $\p$, so all hypotheses of Theorem \ref{comp} are satisfied.
\end{proof}

\begin{rem}
One can also twist elliptic curves by characters of order $\ell > 2$.  In this case the 
twist $E^\chi$ is an abelian variety of dimension $\ell-1$, with an action of 
the group of $\ell$-th roots of unity.  One can study the $\n$-Selmer groups $\Sel_\n(E^\chi/K)$, 
where $\n$ is an ideal of the cyclotomic field of $\ell$-th roots of unity, 
and ask when two elliptic curves $E_1$, $E_2$ have the property that 
$\Sel_\n(E_1^\chi/K) \cong \Sel_\n(E_2^\chi/K)$ for every character 
$\chi$ of $G_K$ of order $\ell$.  An analogue of Theorem \ref{comp} holds in this case, 
with a similar proof.
\end{rem}

\section{Examples}
\label{exas}
Suppose we are given elliptic curves $E_1$, $E_2$ over $K$.  
We next introduce some tools to prove in practice that 
$E_1$ and $E_2$ satisfy the hypotheses of Theorem \ref{comp}.
Let $m = p^k$ or $p^{k+1}$ as in Theorem \ref{comp}.

\begin{rem}
The most difficult hypothesis of Theorem \ref{comp} to check is (i), the existence 
of a $G_K$-isomorphism $E_1[m] \cong E_2[m]$.  
When $m \le 5$ this can be done by computing explicitly the (genus zero) family of 
all elliptic curves $E$ over $K$ with $E[m] \cong E_1[m]$, and checking whether 
$E_2$ belongs to this family.  

For larger $m$ the problem is more subtle.
A necessary condition for the existence of such an isomorphism is that 
for every prime $\l$ of $K$ where $E_1$ and $E_2$ have good reduction, 
the traces of Frobenius $\Frob_\l$ acting on $E_1[m]$ and $E_2[m]$ are the same.  
In some cases this necessary condition can be turned into a sufficient condition 
(see for example \cite{krausoesterle}).  We will take a more brute force 
approach, and for candidate curves that satisfy the necessary condition on traces 
for many primes, we will simply construct an isomorphism directly.  

We will discuss both of these approaches in more detail in Appendix \ref{sm8pf}.
\end{rem}

It is natural to ask how common are pairs of non-isogenous curves $E_1$, $E_2$ with 
$E_1[m] \cong E_2[m]$ as $G_K$-modules.  For some history of this question, 
see \cite{mazur-isog,krausoesterle,kanischanz}.  
Along these lines, we raise the following question.

\begin{ques}
Is there an integer $N$ (depending only on the degree $[K:\Q]$) such that for every $m > N$, there are {\em no} 
pairs of non-isogenous elliptic curves $E_1$, $E_2$ over $K$ such that $E_1[m]$ is $G_K$-isomorphic 
to $E_2[m]$?
\end{ques}

If $E$ is an elliptic curve, let $j(E)$ denote its $j$-invariant.

\begin{rem}
It is well-known (see for example \cite{tatealg}) that 
an elliptic curve $E$ has potentially multiplicative reduction at 
a prime $\l$ if and only if $\ord_\l(j(E)) < 0$.  Thus $S_1 = S_2$ if and only if $j(E_1)$ and 
$j(E_2)$ have the same primes in their denominators.
This makes it very easy to check hypothesis (ii) of Theorem \ref{comp}.

Hypothesis (iii) can often be verified by using Lemma \ref{sameC}.
\end{rem}

\begin{exa}
\label{ex10}
Consider the first example of the Introduction, with $K = \Q$, $p^k=4$, and 
\begin{align*}
&E_1 : y^2+xy+y = x^3-x^2+x+1, \\
&E_2 : y^2+xy+y = x^3-x^2-1666739-2448131309.
\end{align*} 
These are the curves 1242L1 and 1242K1, respectively, in \cite{cremona}.
The following proposition can be proved using the method of \S\ref{method2} of Appendix \ref{sm8pf}, 
using computations in Sage \cite{sage} and PARI/GP \cite{pari}.  
Its proof will be sketched at the end of \S\ref{method2}.

\begin{prop}
\label{samemod8}
There is a $G_\Q$-isomorphism $E_1[8] \cong E_2[8]$.
\end{prop}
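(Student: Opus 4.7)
The plan is to follow the brute-force Method~2 of Appendix~\ref{sm8pf}, since $m = 8$ is beyond the range where the (genus-zero) moduli-space parametrization of curves $E$ with $E[m] \cong E_1[m]$ is practical. The strategy has two stages: gathering numerical evidence (a necessary condition) and then constructing an explicit isomorphism. Both $E_1$ and $E_2$ have conductor $1242 = 2 \cdot 3^3 \cdot 23$, so they share a common set of bad primes, which is a prerequisite.

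\textbf{Necessary condition.} First I would use Sage to compute, for a large set of primes $\ell \nmid 1242$, the traces of Frobenius $a_\ell(E_1)$ and $a_\ell(E_2)$, and check that $a_\ell(E_1) \equiv a_\ell(E_2) \pmod 8$. Failure at a single such prime would disprove the proposition; agreement across a substantial range gives strong numerical evidence that the two mod-$8$ representations have the same character.

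\textbf{Construction of the isomorphism.} To actually produce a $G_\Q$-equivariant bijection $\lambda : E_1[8] \isom E_2[8]$, realize the two mod-$8$ representations concretely. Using PARI/GP, compute the $8$-division polynomial of $E_1$, factor it over $\Q$, and let $L := \Q(E_1[8])$ be its splitting field. Pick a $\Z/8\Z$-basis $\{P_1, Q_1\}$ of $E_1[8]$ whose Weil pairing is a fixed primitive $8$-th root of unity $\zeta_8$, and write down the Galois representation explicitly as a homomorphism $\rho_1 : \Gal(L/\Q) \to \GL_2(\Z/8\Z)$. Verify that $E_2[8] \subset E_2(L)$ by checking that the $8$-division polynomial of $E_2$ splits over $L$, pick a compatible basis $\{P_2, Q_2\}$ of $E_2[8]$, and form $\rho_2$ in the same way. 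Finally, search for a matrix $A \in \GL_2(\Z/8\Z)$ satisfying $A \rho_1(\sigma) A^{-1} = \rho_2(\sigma)$ for every $\sigma$ in a generating set of $\rho_1(G_\Q)$. Any such $A$ encodes the sought-for change of basis and hence the isomorphism $\lambda$.

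\textbf{Main obstacle.} The hard part is the final step: matching traces modulo $8$ is necessary but not a priori sufficient, since the mod-$8$ representation of $G_\Q$ carries strictly more information than its character (the non-semisimplicity and the precise cocycle class of the extension can vary). In practice I would pick a short list of primes $\ell_1, \dots, \ell_r$ whose Frobenius images already generate the common image group $\rho_1(G_\Q)$, solve the linear equations $A \rho_1(\Frob_{\ell_i}) = \rho_2(\Frob_{\ell_i}) A$ to produce a candidate $A$, and then verify via Chebotarev (using a further batch of primes, together with the trace congruences from the first stage) that the conjugation identity persists globally. This produces the required $G_\Q$-isomorphism $E_1[8] \cong E_2[8]$.
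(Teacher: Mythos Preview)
Your overall strategy---build the two mod-$8$ representations explicitly and search for an intertwining matrix---is sound in principle, but as written it has both a practical obstruction and a conceptual wobble, and the paper's Method~2 is organized precisely to sidestep these.

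The practical issue is the field $L=\Q(E_1[8])$. Its degree over $\Q$ can be as large as $|\GL_2(\Z/8\Z)|=1536$, and computing $L$ as an explicit number field, finding its Galois group, and writing down $\rho_1,\rho_2$ on generators is not realistic. The paper never touches $L$ directly. Instead it works with the degree-$24$ polynomial $f_1$ whose roots are the $x$-coordinates of $E_1[8]\smallsetminus E_1[4]$, finds a root of $f_2$ in the degree-$24$ field $\Q(\alpha_1)$, and records the change of $x$-coordinate as a single polynomial $\phi\in\Q[x]$. Because $\phi$ acts on $x$-coordinates, it is automatically $G_\Q$-equivariant on $(E_i[8]\smallsetminus E_i[4])/\{\pm1\}$; the only global ambiguity left is a sign on each point, which the paper packages into a quadratic character $\psi$ (Lemma~\ref{A.1}). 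All of the additivity checking is then done modulo a single prime $\l=19681$ that splits completely in $L$, so one only ever computes in $\F_\l$. Finally $\psi$ is pinned down by comparing $a_q(E_1)$ and $a_q(E_2)$ modulo $8$ at four well-chosen primes. This is why the whole thing runs in under a minute.

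The conceptual wobble is your last step. If you genuinely have $\rho_1,\rho_2$ as explicit matrix-valued maps on a generating set of $\Gal(L/\Q)$, then once $A$ intertwines those generators you are done; there is nothing further for Chebotarev or extra trace checks to verify. Conversely, if all you have at further primes is the conjugacy class of Frobenius (i.e.\ trace data), that cannot certify the identity $A\rho_1 A^{-1}=\rho_2$ over $\Z/8\Z$, since equal traces do not force isomorphism of non-semisimple $\Z/8\Z$-representations. So the ``verify via Chebotarev'' clause either adds nothing or does not do what you want. The paper's argument replaces this with a clean logical structure: $\phi$ gives equivariance up to sign, commutativity of one finite diagram over $\F_\l$ upgrades $\phi$ to a group isomorphism, and a finite list of trace comparisons kills the residual quadratic twist.
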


Computing the $j$-invariants gives
\begin{equation}
\label{ex1j}
j(E_1) = \frac{3^3\cdot 7^3}{2\cdot 23}, \quad j(E_2) = \frac{3\cdot 987697^3}{2^{49}\cdot 23}
\end{equation}
so $S_1 = S_2 = \{2, 23\}$.  Combined with Proposition \ref{samemod8}, this 
shows that Theorem \ref{comp}(i,ii,iv) are satisfied.
For $\ell \in \{2,23\}$ both $E_1$ and $E_2$ have multiplicative reduction at $\ell$, 
and $\ord_\ell(j(E_1))$ is odd, so Lemma \ref{sameC} shows that Theorem \ref{comp}(iii) is satisfied.

Thus all hypotheses of Theorem \ref{comp} 
are satisfied, and we conclude that $E_1$ and $E_2$ are $4$-Selmer companions over every number field.
\end{exa}

The third and fourth examples of the introduction, with $p^k = 7$ and $9$, are proved in exactly the 
same way, using \S\ref{method2} to construct a $G_\Q$-isomorphism $E_1[m] \cong E_2[m]$.  
For the $p^k=9$ example, we use Remark \ref{rem3.2} below so that (since $E_1$ and $E_2$ are semistable) 
we can take $m= 9$ in Theorem \ref{comp} instead of $m = 27$.

\begin{exa}
\label{ex11}
Consider the second example of the Introduction, with $K = \Q$, $p^k=5$, and 
\begin{align*}
E_1 : y^2 &= x^3 + x^2 - 4x - 12 \\
E_2 : y^2 &= x^3 - 28561x + 1856465
\end{align*} 
These are the curves $676B1$ and $676E1$, respectively, in \cite{cremona}.
We will prove the following proposition in \S\ref{method1} of Appendix \ref{sm8pf}.

\begin{prop}
\label{samemod5}
There is a $G_\Q$-isomorphism $E_1[5] \cong E_2[5]$.
\end{prop}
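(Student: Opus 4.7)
The plan is to realize $E_2$ as a fiber of an explicit one-parameter family of elliptic curves $\{E_t : t \in \rP^1(\Q)\}$ each of whose mod-$5$ Galois representation is isomorphic to $E_1[5]$. Since the modular curve $X(5)$ has genus zero over $\Q$, the twisted modular curve $X_{E_1}(5)$ parametrizing pairs $(E,\phi)$ with $\phi : E_1[5] \isom E[5]$ a \emph{symplectic} $G_\Q$-isomorphism (with respect to the Weil pairing) is also a genus-zero curve over $\Q$. The pair $(E_1,\mathrm{id})$ gives a $\Q$-rational point, so $X_{E_1}(5) \cong \rP^1_\Q$. I would use the explicit level-$5$ parametrizations of the universal family over $X_{E_1}(5)$ (due to Rubin--Silverberg, and refined by Fisher) to write down polynomials $A(t), B(t) \in \Q[t]$, computed from the Weierstrass data of $E_1$, such that $E_t : y^2 = x^3 + A(t)x + B(t)$ has the required property for each $t \in \Q$.

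Next, I would search for a value $t_0 \in \Q$ making $E_{t_0}$ $\Q$-isomorphic to $E_2$, or at worst to a quadratic twist of $E_2$. As a preliminary filter one verifies that $a_\ell(E_1) \equiv a_\ell(E_2) \pmod{5}$ holds for many good primes $\ell$: by Chebotarev, these congruences are necessary for $E_1[5] \cong E_2[5]$ as $G_\Q$-modules. Assuming the congruences persist, matching $j$-invariants (and, after rescaling, the discriminants) cuts the candidate $t_0$ down to a finite explicit list, and a direct substitution confirms $E_{t_0} \cong_\Q E_2$.

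Two obstacles remain. The first, more technical, is actually producing the universal family over $X_{E_1}(5)$ from the Weierstrass data of $E_1$: this requires computing the twist of $X(5)$ by the $G_\Q$-action on $E_1[5]$ and its universal elliptic curve, which is feasible but nontrivial. The second is the symplectic-versus-anti-symplectic subtlety: $X_{E_1}(5)$ classifies only symplectic isomorphisms, while the proposition asks merely for a $G_\Q$-isomorphism. If $E_2$ fails to appear on $X_{E_1}(5)$, I would repeat the argument with the companion ``anti-symplectic'' twist of $X(5)$, which is again a conic and is parametrizable by $\rP^1$ once one produces a rational point (for instance via a suitable quadratic twist of $E_1$ that flips the sign of the Weil pairing); a rational point on either conic yields the desired $G_\Q$-isomorphism $E_1[5] \cong E_2[5]$.
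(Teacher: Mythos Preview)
Your proposal is correct and follows essentially the same route as the paper: the paper uses the explicit Rubin--Silverberg universal family $\E_t$ over $X_{E_1}(5)$, solves $j(\E_t) = j(E_2)$ to find the unique rational root $t_0 = -9/22$, and checks that $\E_{-9/22}$ is $\Q$-isomorphic to $E_2$. Your anti-symplectic contingency is unnecessary here since $E_2$ already lies on the symplectic twist, and your preliminary trace-congruence filter, while harmless, is not needed for the proof.
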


Both $E_1$ and $E_2$ have conductor $676 = 2^2 \cdot 13^2$, $j(E_1) = -208$, and 
$j(E_2) = 1168128$.  Thus $E_1, E_2$ have good reduction at $5$ and potentially good 
reduction everywhere, so $S_1 = S_2$ is the empty set and all hypotheses of
Theorem \ref{comp} are satisfied.
We conclude that $E_1$ and $E_2$ are $5$-Selmer companions over every number field.
\end{exa}

The strategy of Example \ref{ex11}, along with the method described in \S\ref{method1} of 
Appendix \ref{sm8pf}, works in exactly the same way to 
give many more examples of $p$-Selmer companions over $\Q$ with $p = 2, 3$ or $5$.

\section{Elliptic curves over local fields, continued}

For this section let $K$ be a finite extension of $\Ql$ or of $\R$.  
Fix a rational prime power $p^k$.  
For every elliptic curve $E/K$, let $\kappa_E = \kappa_{E/K}$ denote 
the Kummer map 
$$
\kappa_{E/K} : E(K) \too H^1(K,E[p^k]).
$$
For the proof of Theorem \ref{comp} we need to understand the image of $\kappa_E$.  
We will consider several cases.

\subsection{Potentially good reduction}

\begin{lem}
\label{12.2}
Suppose that either $K$ is archimedean, or $K$ is nonarchimedean of residue characteristic 
different from $p$ and $E$ has bad, potentially good reduction.
\begin{enumerate}
\item
If $p \le 3$, then $\image(\kappa_{E})$ 
is represented by the cocycles
$$
\{c_t : t \in E[p^{k+1}], p^k t \in E(K)\},
$$
where  $c_t$ is defined by $c_t(\sigma) = t^\sigma - t$ for $\sigma \in G_K$.
\item
If $p > 3$, or if $v$ is archimedean and $p > 2$, then $\image(\kappa_E) = 0$.
\end{enumerate}
\end{lem}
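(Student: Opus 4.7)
The plan is to reduce both parts of the lemma to the single claim that the natural map $E(K)[p] \to E(K)/p^k E(K)$ is surjective, with the stronger statement $E(K)/p^k E(K) = 0$ giving (ii). The reason this suffices is that the connecting homomorphism $\delta\colon E[p](K) \to H^1(K, E[p^k])$ arising from $0 \to E[p^k] \to E[p^{k+1}] \xrightarrow{p^k} E[p] \to 0$ sends $P_0$ to the class $[c_t]$ for any lift $t \in E[p^{k+1}]$ with $p^k t = P_0$, and this cocycle is exactly the Kummer cocycle $\kappa_E(P_0)$ computed using the specific $p^k$-th root $t$. Thus once the surjectivity is known, $\image(\kappa_E) = \kappa_E(E(K)[p]) = \image(\delta)$ is precisely the set of classes described in (i).

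For the nonarchimedean case I would exploit the Néron filtration $E_1(K) \subset E_0(K) \subset E(K)$. The formal group $E_1(K)$ is pro-$\ell$ in the residue characteristic $\ell \ne p$, hence uniquely $p$-divisible; and under the bad potentially good hypothesis the reduction is additive, so $E_0(K)/E_1(K) \cong \tilde E_{\mathrm{ns}}(k) \cong \G_a(k)$ is a finite abelian group of $\ell$-power order, again $p$-divisible. Consequently $E_0(K)$ is $p$-divisible, and the snake lemma applied to multiplication by $p^k$ on $0 \to E_0(K) \to E(K) \to \Phi \to 0$ (with $\Phi$ the component group) produces an isomorphism $E(K)/p^k E(K) \isom \Phi/p^k \Phi$ together with a surjection $E(K)[p^k] \onto \Phi[p^k]$.

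The remaining work is a finite-group calculation on $\Phi$. Since $E$ has potentially \emph{good} rather than potentially multiplicative reduction, its Kodaira type is one of $\mathrm{II}, \mathrm{III}, \mathrm{IV}, \mathrm{I}_0^*, \mathrm{IV}^*, \mathrm{III}^*, \mathrm{II}^*$, so $\Phi$ embeds in $\Z/2 \oplus \Z/2$ or $\Z/3$ and $|\Phi|$ divides $12$. For $p > 3$ this forces $\Phi/p^k\Phi = 0$, proving (ii) in the nonarchimedean case. For $p \in \{2,3\}$ the $p$-primary part $\Phi_p$ has exponent dividing $p$, so $\Phi_p = \Phi[p] = \Phi/p^k \Phi$, and composing with the snake-lemma surjection for $k = 1$ gives the desired surjection $E(K)[p] \onto E(K)/p^k E(K)$.

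The archimedean case is immediate by direct inspection: $E(\C)$ is divisible so both claims are trivial, while $E(\R) \cong S^1 \oplus \Phi$ with $|\Phi| \in \{1,2\}$ reduces to the same accounting (the quotient is $p$-divisible for odd $p$, proving (ii); and for $p = 2$ every class in $E(\R)/2^k E(\R) \cong \Phi$ is hit by $E(\R)[2]$). I expect the only delicate input to be the $p$-divisibility of $E_0(K)$ via the additive structure of $\tilde E_{\mathrm{ns}}$, and this is precisely where the hypothesis of potentially \emph{good} reduction enters: potentially multiplicative Kodaira types $\mathrm{I}_n$ or $\mathrm{I}_n^*$ would permit $\Phi_p$ of larger exponent and break the final surjectivity argument.
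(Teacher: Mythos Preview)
Your proof is correct and follows essentially the same route as the paper: both reduce to the surjectivity of $E(K)[p] \to E(K)/p^kE(K)$ via the N\'eron filtration $E_1(K) \subset E_0(K) \subset E(K)$, using that $E_1(K)$ and $E_0(K)/E_1(K)$ are pro-$\ell$ under the additive-reduction hypothesis and that the $p$-part of the component group is killed by $p$ (trivial for $p>3$). The only cosmetic difference is that for archimedean $K$ and $p>2$ the paper observes directly that $H^1(K,E[p^k])=0$ since $|G_K|\le 2$, whereas you argue $E(K)/p^kE(K)=0$; both are immediate.
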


\begin{proof}
We consider the archimedean and nonarchimedean cases separately.

\medskip\noindent{\em Case 1: $K$ is archimedean.}
In this case (ii) is clear, because $G_K$ has order $1$ or $2$ so $H^1(K,E[p^k]) = 0$.

For every $p$, the map $E(K)[p] \to E(K)/p^kE(K)$ is surjective, so 
$\image(\kappa_E) = \kappa_E(E(K)[p])$.
If $s \in E(K)[p]$, then $\kappa_E(s)$ is represented by the cocycle 
$c_t$ for (every) $t \in E[p^{k+1}]$ with $p^k t = s$.  This proves (i) in this case.

\medskip\noindent{\em Case 2: $K$ is nonarchimedean of residue characteristic 
different from $p$ and $E$ has bad, potentially good reduction.}
Consider the filtration
$$
E_1(K) \subset E_0(K) \subset E(K)
$$
where $E_0(K)$ is the subgroup of points with nonsingular reduction, and $E_1(K)$ is the 
subgroup of points that reduce to zero \cite[Chapter VII]{silverman}.  Then 
$E_1(K)$ is a pro-$\ell$-group, where $\ell$ is the residue characteristic of $K$. 
Since $E$ has additive reduction, $E_0(K)/E_1(K)$ is isomorphic to the additive group 
of the residue field of $K$, 
also an $\ell$-group.
The description of $E(K)/E_0(K)$ in \cite{tatealg} shows that the $p$-part of $E(K)/E_0(K)$ 
is killed by $p$, and is trivial if $p > 3$.  Hence $E(K)/p^kE(K) = 0$ if $p > 3$, and 
$E(K)[p] \to E(K)/p^kE(K)$ is surjective for every $p$.  Therefore 
$\image(\kappa_E) = 0$ if $p > 3$, and (as in Case 1) 
$\image(\kappa_E) = \kappa_E(E(K)[p])$ is represented by the cocycles 
$\{c_t : t \in E[p^{k+1}], p^k t \in E(K)\}$ if $p \le 3$.
This completes the proof.
\end{proof}

\begin{rem}
\label{rem3.2}
Suppose $p = 3$.  If $K$ is nonarchimedean and the Kodaira type of the reduction of $E$ is not 
$\rm{II}$, $\rm{IV}$, $\rm{II}^*$, or $\rm{IV}^*$, then \cite{tatealg} 
shows that $E(K)/E_0(K)$ has order prime to $3$.  
Thus in those cases we have $\image(\kappa_E) = 0$, just as when $p > 3$.  
This allows us to take $m = 3^k$ instead of $m = 3^{k+1}$ in Theorem \ref{comp} 
in this case (see Remark \ref{comp+}).
\end{rem}

\begin{lem}
\label{samered}
Suppose $K$ is nonarchimedean of residue characteristic not dividing $m$, and $E_1$, $E_2$ 
are elliptic curves over $K$ with potentially good reduction.  
If $m \ge 3$ and $E_1[m] \cong E_2[m]$ as $G_{K}$-modules, then either both 
$E_1$ and $E_2$ have good reduction, or both have bad reduction. 
\end{lem}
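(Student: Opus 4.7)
The plan is to deduce the lemma from the following strengthening of the Néron--Ogg--Shafarevich criterion: if $E$ is an elliptic curve over a finite extension of $\Ql$ with potentially good reduction, and $m\ge 3$ is prime to the residue characteristic, then $E$ has good reduction over $K$ if and only if the inertia subgroup $I_K\subset G_K$ acts trivially on $E[m]$. Granting this refinement, the lemma is immediate: the given $G_K$-isomorphism $E_1[m]\cong E_2[m]$ is in particular an $I_K$-isomorphism, so $I_K$ acts trivially on $E_1[m]$ precisely when it acts trivially on $E_2[m]$. Applying the criterion to each $E_i$ then yields the claimed dichotomy.

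To prove the refinement, I would fix a finite Galois extension $L/K$ over which $E$ acquires good reduction. The usual Néron--Ogg--Shafarevich criterion tells us that $I_L$ acts trivially on $T_\ell E$ for every $\ell$ prime to the residue characteristic, and hence on $E[m]$. The residual action of $I_K/I_L$ on $E[m]$ coincides, via the reduction isomorphism $E[m]\isom \tilde E_L[m]$ (valid because $m$ is coprime to the residue characteristic), with the action of $I_K/I_L$ on $\tilde E_L[m]$ coming from its action by automorphisms on the special fiber $\tilde E_L$ of the Néron model over $L$.

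The key additional ingredient is the classical rigidity fact that, for $m\ge 3$ prime to the characteristic of the base field, the natural homomorphism $\Aut(\tilde E_L)\hookrightarrow \Aut(\tilde E_L[m])$ is injective: indeed $\Aut(\tilde E_L)$ has order at most $24$, and for any nontrivial automorphism $\phi$ the kernel of $\phi-1$ has order at most $\deg(\phi-1)\le 4$, which is strictly less than $m^2$. With this injectivity in hand, triviality of the $I_K$-action on $E[m]$ forces triviality of the $I_K$-action on $\tilde E_L$ itself, hence descent of the Néron model from $L$ to $K$, hence good reduction of $E$ over $K$. The converse direction is trivial.

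The main obstacle is really just the refinement itself; once stated, the proof of the lemma consists only of transporting the $I_K$-action across the given isomorphism. The rigidity fact requires a small case check for supersingular curves in residue characteristics $2$ and $3$ where $|\Aut(\tilde E_L)|$ is largest, but these are precisely the cases where our hypothesis that $m$ is coprime to the residue characteristic forces $m\ge 3$ to avoid the problematic prime, keeping $\deg(\phi-1)<m^2$.
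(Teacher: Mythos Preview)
Your proposal is correct and takes essentially the same approach as the paper: the paper's proof is a one-line citation of \cite[Corollary 2(b) of Theorem 2]{serre-tate}, which is precisely the refined Néron--Ogg--Shafarevich criterion you state and sketch (good reduction $\iff$ $I_K$ acts trivially on $E[m]$ for some $m\ge 3$ prime to the residue characteristic). Your rigidity argument via $\deg(\phi-1)\le 4<m^2$ is the standard one; the only slightly hand-wavy step is ``descent of the Néron model,'' which is more cleanly phrased as: triviality of $I_K/I_L$ on $\tilde E_L$ forces triviality on $T_\ell\tilde E_L\cong T_\ell E$, and then the usual criterion gives good reduction over $K$.
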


\begin{proof}
This follows directly from \cite[Corollary 2(b) of Theorem 2]{serre-tate}.
\end{proof}

\subsection{Potentially multiplicative reduction}

Recall that if $E$ has potentially multiplicative reduction, then Proposition \ref{tateunif} 
gives a parameter $q$ and an isomorphism $\tau_{E/K} : \bar{K}^\times/q^\Z \to E(\bar{K})$.

\begin{lem}
\label{3.4}
Suppose $E/K$ has split multiplicative reduction.  
Then $\image(\kappa_{E})$ is the image of the composition
$$
K^\times \too H^1(K,\bmu_{p^k}) \too H^1(K,E[p^k])
$$
where the first map is the classical Kummer map, and the second is induced by 
the inclusion $\tau_{E/K} :\bmu_{p^k} \hookto E[p^k]$.
\end{lem}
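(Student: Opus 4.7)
The plan is to exploit the Tate uniformization from Proposition \ref{tateunif}: for split multiplicative reduction $\tau_{E/K}$ is $G_K$-equivariant, giving the exact sequence of $G_K$-modules
$$
0 \too q^\Z \too \bar{K}^\times \too E(\bar{K}) \too 0.
$$
Taking $G_K$-cohomology, Hilbert 90 gives $H^1(K,\bar{K}^\times) = 0$, and because $G_K$ acts trivially on $q^\Z \cong \Z$ we have $H^1(K,q^\Z) = \Hom(G_K,\Z) = 0$. Hence $\tau_{E/K}$ induces a surjection $K^\times \onto E(K)$. This surjectivity is the main (though mild) obstacle, as it depends on the vanishing of $H^1(G_K,\Z)$ for a profinite group.

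Next I would set up a commutative square comparing the two Kummer maps. For $u \in K^\times$, choose $v \in \bar{K}^\times$ with $v^{p^k}=u$; then $\tau_{E/K}(v) \in E(\bar{K})$ satisfies $p^k\cdot\tau_{E/K}(v) = \tau_{E/K}(u)$, and
$$
\sigma \mapsto \tau_{E/K}(v)^\sigma - \tau_{E/K}(v) = \tau_{E/K}(v^\sigma/v)
$$
represents $\kappa_{E}(\tau_{E/K}(u))$. Since $\sigma \mapsto v^\sigma/v \in \bmu_{p^k}$ is the classical Kummer cocycle of $u$, this shows that the square
$$
\begin{array}{ccc}
K^\times & \too & H^1(K,\bmu_{p^k}) \\
\downarrow & & \downarrow \\
E(K) & \xrightarrow{\kappa_{E}} & H^1(K,E[p^k])
\end{array}
$$
commutes, where the left vertical arrow is $\tau_{E/K}$ and the right vertical arrow is induced by the inclusion $\tau_{E/K}:\bmu_{p^k} \hookto E[p^k]$.

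The two inclusions follow immediately. By commutativity, the image of the top-right composition sits inside $\image(\kappa_{E})$. Conversely, given $P \in E(K)$, the surjectivity established in the first paragraph produces $u \in K^\times$ with $\tau_{E/K}(u) = P$, and the square shows $\kappa_{E}(P)$ agrees with the image of $u$ along the top-right path. This gives the claimed equality.
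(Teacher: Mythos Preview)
Your proof is correct and follows exactly the same approach as the paper: set up the commutative square comparing the two Kummer maps, and use surjectivity of $\tau_{E/K}:K^\times \onto E(K)$ to conclude. The paper simply records the square with a surjective left arrow and says the lemma follows, leaving the verification of surjectivity (which you supply via Hilbert~90 and $H^1(G_K,\Z)=0$) implicit.
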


\begin{proof}
This follows directly from the commutativity of the diagram
$$
\xymatrix{
K^\times \ar[r]\ar@{->>}_-{\tau_{E/K}}[d] & H^1(K,\bmu_{p^k}) \ar^-{\tau_{E/K}}[d] \\
E(K) \ar^-{\kappa_{E}}[r] & H^1(K,E[p^k]).
}
$$
\end{proof}

\begin{defn}
Suppose $L$ is a quadratic extension of $K$.  Let $N_{L/K} : L^\times \to K^\times$ be the norm map, 
and let $\psi \in \Xset(K)$ be the quadratic character attached to $L/K$.  
Let $\bmu_{p^k} \otimes \psi$ denote a copy of $\bmu_{p^k}$ with $G_K$ acting by $\epsilon_{p^k} \psi$ 
instead of $\epsilon_{p^k}$, and define a twisted Kummer map
$$
\nu_{L/K} : \ker(N_{L/K}) \too H^1(K,\bmu_{p^k} \otimes \psi)
$$
where for $x \in \ker(N_{L/K})$, we choose $u \in \bar{K}$ such that $u^{p^k} = x$, and then 
$\nu_{L/K}(x)$ is represented by the cocycle 
$\sigma \mapsto (u^\sigma)^{\psi(\sigma)}/u$.
We leave it as an exercise to show that $\sigma \mapsto (u^\sigma)^{\psi(\sigma)}/u$ is indeed a cocycle 
with values in $\bmu_{p^k} \otimes \psi$, when $x \in \ker(N_{L/K})$.
\end{defn}

\begin{lem}
\label{3.6}
Suppose $E/K$ has potentially multiplicative reduction, with nontrivial splitting character 
$\psi$ (i.e., $E/K$ does not have split multiplicative reduction).  
Let $q \in K^\times$ be the Tate parameter, and let 
$L$ be the quadratic extension of $K$ attached to $\psi$.  
Let $\H$ be the image of the composition
$$
\ker(N_{L/K}) \map{\nu_{L/K}} H^1(K,\bmu_{p^k}\otimes\psi) \map{\tau_{E/K}} H^1(K,E[p^k]).
$$
\begin{enumerate}
\item
If $p>2$, or if $p=2$ and $q \notin N_{L/K}L^\times$, then $\image(\kappa_{E}) = \H$.
\item
Suppose $p=2$ and  $q = N_{L/K}(\beta)$ with $\beta \in L^\times$.  Then $\image(\kappa_{E})$ is the 
subgroup of $H^1(K,E[p^k])$ generated by $\H$ and the class of the cocycle
$$
\sigma \mapsto \tau_{E/K}((\alpha^\sigma)^{\psi(\sigma)}/\alpha)
$$
where $\alpha \in \bar{K}^\times$ and $\alpha^{2^k} = \beta$.
\end{enumerate}
\end{lem}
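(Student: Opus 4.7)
The plan is to exploit Tate uniformization over the quadratic extension $L/K$ attached to $\psi$, where $E$ acquires split multiplicative reduction. By Proposition \ref{tateunif}, restricting $\tau_{E/K}$ to $G_L$ yields a $G_L$-equivariant isomorphism $\bar{K}^\times/q^\Z \isom E(\bar{K})$, so $E(L)=L^\times/q^\Z$. Since $\psi(\sigma)=\pm1$, the formula of Proposition \ref{tateunif} rearranges to $\tau_{E/K}(u)^\sigma = \tau_{E/K}((u^\sigma)^{\psi(\sigma)})$; hence $\tau_{E/K}(u)\in E(K)$ if and only if $u^\sigma \equiv u^{\psi(\sigma)}$ in $\bar{K}^\times/q^\Z$ for every $\sigma\in G_K$, which, specializing to $\sigma$ nontrivial on $L$, is equivalent to $N_{L/K}(u)\in q^\Z$. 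Thus
$$
E(K) \;\cong\; N'/q^\Z, \qquad N' := \{u \in L^\times : N_{L/K}(u) \in q^\Z\}.
$$

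First, I would compute $E(K)/p^k E(K)$ by the snake lemma applied to
$$
0 \to \ker(N_{L/K}) \to N'/q^\Z \xrightarrow{N_{L/K}} \bigl(q^\Z\cap N_{L/K}(L^\times)\bigr)/q^{2\Z} \to 0
$$
(noting that $N_{L/K}(q^\Z) = q^{2\Z}$). The right-hand quotient is trivial when $q\notin N_{L/K}(L^\times)$ and is $\Z/2\Z$ (generated by the class of $\beta$) otherwise. Multiplication by $p^k$ acts as zero on $\Z/2\Z$ precisely when $p=2$, so the snake lemma gives $E(K)/p^k E(K) \cong \ker(N_{L/K})/p^k \ker(N_{L/K})$ in case (i), with one additional cyclic generator represented by $\beta$ in case (ii).

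Next, I would make the Kummer cocycles explicit. For $P = \tau_{E/K}(u)$ with $u\in\ker(N_{L/K})$, choose $v\in\bar{K}^\times$ with $v^{p^k}=u$ and set $Q=\tau_{E/K}(v)$; then $p^k Q = P$ and the rearranged formula gives
$$
Q^\sigma - Q \;=\; \tau_{E/K}\bigl((v^\sigma)^{\psi(\sigma)}/v\bigr).
$$
Since $u \in \ker(N_{L/K})$ forces $u^\sigma = u^{\psi(\sigma)}$ identically in $\bar{K}^\times$ (not merely modulo $q^\Z$), one has $\bigl((v^\sigma)^{\psi(\sigma)}/v\bigr)^{p^k} = (u^\sigma)^{\psi(\sigma)}/u = 1$, so the cocycle takes values in $\bmu_{p^k}$, matching $\tau_{E/K}\circ\nu_{L/K}(u)$ exactly; this identifies its image in $H^1(K,E[p^k])$ with $\H$. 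For the extra generator in case (ii), taking $\alpha \in \bar{K}^\times$ with $\alpha^{2^k}=\beta$ and repeating the calculation produces the displayed cocycle.

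The main subtlety will be verifying that in case (ii) the new cocycle genuinely lies outside $\H$. Here the key point is that although $\bigl((\alpha^\sigma)^{\psi(\sigma)}/\alpha\bigr)^{2^k} = (\beta^\sigma)^{\psi(\sigma)}/\beta$ is trivial modulo $q^\Z$ (as required for a $2^k$-torsion class in $E$), as an element of $\bar{K}^\times$ it equals $q^{-1}$ when $\sigma$ is nontrivial on $L$. Hence this cocycle takes values in $E[2^k]\setminus\cC_{E/K}[2^k]$, whereas $\H$ lies entirely in $H^1(K,\cC_{E/K}[2^k])=H^1(K,\bmu_{2^k}\otimes\psi)$ by Lemma \ref{Cchar}; this ensures the independence asserted by the snake-lemma count and completes the identification of $\image(\kappa_E)$ in both cases.
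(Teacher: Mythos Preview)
Your approach---identifying $E(K)$ with the elements of $L^\times/q^\Z$ of norm in $q^\Z$ via Tate uniformization, extracting generators by the snake lemma, and then computing the Kummer cocycles explicitly---is essentially the paper's proof, just with the snake lemma organized slightly differently. Your final paragraph, however, is both unnecessary (the lemma only claims that $\H$ and the extra cocycle \emph{generate} $\image(\kappa_E)$, not that the cocycle lies outside $\H$; the paper does not verify this either) and incomplete as written: a cocycle taking some values outside $\cC_{E/K}[2^k]$ does not by itself force its class outside the image of $H^1(K,\cC_{E/K}[2^k])$, since a coboundary $\sigma\mapsto P^\sigma-P$ can also land outside that subgroup---if you want the index-$2$ statement, it already follows from your snake-lemma count together with the injectivity of $\kappa_E$ on $E(K)/2^kE(K)$.
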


\begin{proof}
Consider the commutative diagram with exact rows
$$
\xymatrix{
1 \ar[r] & q^\Z \ar_{2}[d] \ar[r] & L^\times \ar^{N_{L/K}}[d] \ar[r] & L^\times/q^\Z \ar^{\cN}[d] \ar[r] & 1 \\
1 \ar[r] & q^\Z \ar[r] & K^\times \ar[r] & K^\times/q^\Z \ar[r] & 1
}
$$
where we denote by $\cN$ the right-hand norm map.  The snake lemma shows that 
the map $\ker(N_{L/K}) \to \ker(\cN)$ is surjective if $q \notin N_{L/K}L^\times$, 
and if $q = N_{L/K}(\beta)$ then the cokernel of that map has order $2$, and 
$\ker(\cN)$ is generated by the image of $\ker(N_{L/K})$ and $\beta$.

Proposition \ref{tateunif} shows that we have a commutative diagram
$$
\xymatrix{
\ker(N_{L/K}) \ar^-{\nu_{L/K}}[r]\ar_-{\tau_{E/K}}[d] 
   & H^1(K,\bmu_{p^k}\otimes \psi) \ar^-{\tau_{E/K}}[d] \\
E(K) \ar^-{\kappa_{E}}[r] & H^1(K,E[p^k]),
}
$$
and $\tau_{E/K} : \ker(\cN) \to E(K)$ is an isomorphism.  
(To see this last fact, note that $\tau_{E/K} : L^\times/q^\Z \to E(L)$ is a group isomorphism 
satisfying $\tau_{E/K}(u^\sigma) = -\tau_{E/K}(u)^\sigma$ if $\sigma$ is the 
nontrivial automorphism of $L/K$, so $\tau_{E/K}(u) \in E(K)$ if and only if 
$u^\sigma = u^{-1}$, i.e., if and only if $\cN(u) = 1$.)
It follows that $\image(\kappa_{E}) = \H$ if $p > 2$ or if $q \notin N_{L/K}L^\times$, 
and if $q = N_{L/K}(\beta)$ then $\image(\kappa_{E})$ is generated by 
$\H$ and $\kappa_E(\tau_{E/K}(\beta))$.

Suppose $p=2$ and  $q = N_{L/K}(\beta)$, and choose 
$\alpha \in \bar{K}^\times$ with $\alpha^{2^k} = \beta$.  Then 
by definition $\kappa_E(\tau_{E/K}(\beta))$ is represented by the cocycle 
$\sigma \mapsto \tau_{E/K}(\alpha)^\sigma - \tau_{E/K}(\alpha)$, and by Proposition \ref{tateunif} 
$$
\tau_{E/K}(\alpha)^\sigma - \tau_{E/K}(\alpha) = \psi(\sigma)\tau_{E/K}(\alpha^\sigma) - \tau_{E/K}(\alpha) 
   = \tau_{E/K}((\alpha^\sigma)^{\psi(\sigma)}/\alpha).
$$
This completes the proof of the lemma.
\end{proof}

\begin{lem}
\label{3.7}
Let $m = 2^{k+1}$ if $p = 2$, and $m = p^k$ if $p > 2$.
Suppose $E_1$ and $E_2$ are elliptic curves over $K$ with potentially multiplicative reduction, 
and there is a $G_K$-isomorphism $E_1[m] \cong E_2[m]$ that identifies 
$\cC_{E_1/K}[m]$ with $\cC_{E_2/K}[m]$.  
\begin{enumerate}
\item
The splitting characters of $E_1$ and $E_2$ are equal.
\item
Suppose $p = 2$.  
Let $q_i$ be the Tate parameter of $E_i$, and $\pi_1 \in \bar{K}^\times$ with $\pi_1^{2^k} = q_1$.  
Then there is an $x \in K^\times$ and an odd integer $n$ such that 
$q_2^n = q_1 x^m$ and $\tau_{E_1/K}(\pi_1) = \tau_{E_2/K}(\pi_1 x^{2})$ in $E_1[2^k] = E_2[2^k]$.
\end{enumerate}
\end{lem}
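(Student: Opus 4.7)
For (i), I would restrict the given $G_K$-isomorphism $\lambda\colon E_1[m]\isom E_2[m]$ to the canonical subgroups. By Lemma~\ref{Cchar} and the hypothesis that $\lambda$ sends $\cC_{E_1/K}[m]$ to $\cC_{E_2/K}[m]$, this yields a $G_K$-isomorphism $\bmu_m\otimes\psi_1\cong\bmu_m\otimes\psi_2$, hence the equality $\epsilon_m\psi_1=\epsilon_m\psi_2$ of characters $G_K\to(\Z/m\Z)^\times$. Since $m\ge3$ in every case ($m=2^{k+1}\ge4$ when $p=2$ and $m=p^k\ge3$ otherwise), $\{\pm1\}$ injects into $(\Z/m\Z)^\times$, so $\psi_1=\psi_2$.

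For (ii), write $\psi=\psi_1=\psi_2$ and consider the filtration $0\to\cC_{E_i/K}[m]\to E_i[m]\to Q_i\to 0$; by Proposition~\ref{tateunif}, $Q_i\cong(\Z/m\Z)\otimes\psi$ is generated by the class of $\tau_i(\tilde\pi_i)$ for any $\tilde\pi_i\in\bar K^\times$ with $\tilde\pi_i^m=q_i$, and I would choose $\tilde\pi_1$ with $\tilde\pi_1^2=\pi_1$. Being filtration-preserving, $\lambda$ acts by scalars $a,b\in(\Z/m\Z)^\times$ on $\cC$ and $Q$ respectively. The main reduction is to replace $\lambda$ by $[a^{-1}]\circ\lambda$, where $[a^{-1}]$ is multiplication by $a^{-1}$ on $E_2[m]$ (a $G_K$-equivariant automorphism); the new $\lambda$ still satisfies the hypotheses of the lemma but now has scalar $1$ on $\cC$ and scalar $ba^{-1}$ on $Q$. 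Since $m=2^{k+1}$, I lift $ba^{-1}$ to an odd integer $n$ and use this normalized $\lambda$ as the identification $E_1[m]=E_2[m]$.

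Write $\lambda(\tau_1(\tilde\pi_1))=\tau_2(\beta)$ for a lift $\beta\in\bar K^\times$ chosen so that $\beta^m=q_2^n$. Combining the $G_K$-equivariance of $\lambda$ with Proposition~\ref{tateunif} and the fact that $\lambda$ is now the identity on canonical subgroups yields $\beta^\sigma/\beta=\tilde\pi_1^\sigma/\tilde\pi_1$ for every $\sigma\in G_K$ (the crucial cancellation uses $a=1$). Hence $y:=\beta/\tilde\pi_1\in K^\times$. Combining $\beta=y\tilde\pi_1$ with $\beta^m=q_2^n$ gives $q_2^n=q_1 y^m$; setting $x:=y$ proves the first equation. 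For the second, squaring and using $\tilde\pi_1^2=\pi_1$,
\[
\lambda(\tau_1(\pi_1)) \;=\; 2\tau_2(\beta) \;=\; \tau_2(\beta^2) \;=\; \tau_2(y^2\pi_1) \;=\; \tau_2(\pi_1 x^2).
\]

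The principal subtlety is the normalization in paragraph two. Without replacing $\lambda$ by $[a^{-1}]\circ\lambda$, the analogous computation gives $\lambda(\tau_1(\pi_1))=\tau_2(y^2\pi_1^a)$, whose equality with $\tau_2(\pi_1 x^2)$ modulo $q_2^\Z$ would require $\pi_1^{a-1}$ to lie in $(K^\times)^2\cdot q_2^\Z$, a condition that fails for general $a$. Reducing to $a=1$ sidesteps this difficulty entirely.
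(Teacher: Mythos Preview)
Your proof is correct and follows essentially the same approach as the paper's. Both arguments normalize $\lambda$ by an odd multiple so that $\lambda\circ\tau_1=\tau_2$ on $\bmu_m$, lift $\lambda(\tau_1(\tilde\pi_1))$ to some $\beta\in\bar K^\times$, use the $G_K$-equivariance of $\lambda$ together with Proposition~\ref{tateunif} to deduce $\beta^\sigma/\beta=\tilde\pi_1^\sigma/\tilde\pi_1$ (hence $\beta/\tilde\pi_1\in K^\times$), and then square to pass from $\tilde\pi_1$ to $\pi_1$. The only cosmetic difference is that you extract the odd integer $n$ \emph{a priori} as a lift of the scalar $ba^{-1}$ by which $\lambda$ acts on the quotient $Q\cong(\Z/m\Z)\otimes\psi$, whereas the paper first picks $\beta_2$, defines $n$ by $\beta_2^m=q_2^n$, and then argues by contradiction that $n$ is odd; these are two phrasings of the same observation that an isomorphism on the rank-one quotient must be multiplication by a unit of $\Z/m\Z$. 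One small point you leave implicit: having fixed $n$ as a lift of $ba^{-1}$, the existence of $\beta$ with both $\tau_2(\beta)=\lambda(\tau_1(\tilde\pi_1))$ and $\beta^m=q_2^n$ requires that $n$ be congruent mod $m$ to the exponent forced by the first condition --- but this is exactly what your identification of $b$ with that exponent guarantees.
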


\begin{proof}
Since $\cC_{E_1/K}[m]$ and $\cC_{E_1/K}[m]$ are isomorphic $G_K$-modules, 
and $m > 2$, assertion (i) follows directly from Lemma \ref{Cchar}.

Suppose $p=2$, and let $\lambda : E_1[m] \to E_2[m]$ be the $G_K$-isomorphism.  
We will abbreviate $\tau_i := \tau_{E_i/K}$.  Replacing $\lambda$ by an odd multiple 
of $\lambda$ if necessary, we may assume that the diagram
\begin{equation}
\label{sl}
\raisebox{23pt}{
\xymatrix@R4pt@C50pt{
& E_1[m] \ar^-{\lambda}[dd] \\
\bmu_m \ar^{\tau_1}[ur] \ar_{\tau_2}[dr] \\
& E_2[m]
}}
\end{equation}
commutes.

Fix $\beta_1 \in \bar{K}^\times$ such that $\beta_1^m = q_1$.  Then $\tau_1(\beta_1) \in E_1[m]$, 
and we fix $\beta_2 \in \bar{K}^\times$ such that $\tau_2(\beta_2) = \lambda(\tau_1(\beta_1))$. 
Since $\tau_2(\beta_2) \in E_2[m]$, we have $\beta_2^m \in q_2^\Z$, say $\beta_2^m = q_2^n$.  
If $n$ is even, then $\beta_2^{m/2} = \pm q_2^{n/2}$, so 
$\tau_2(\beta_2^{m/2}) = \tau_2(\pm1) \in \cC_{E_2/K}[m]$.  
But $\tau_1(\beta_1^{m/2}) \notin \cC_{E_1/K}[m]$, so this is impossible and 
$n$ must be odd.

Let $\psi$ denote the common splitting character of $E_1$ and $E_2$.  By Proposition \ref{tateunif}, 
for every $\sigma \in G_K$ we have 
$$
\tau_i(\beta_i^\sigma/\beta_i) = \tau_i(\beta_i^\sigma) - \tau_i(\beta_i) 
   = \psi(\sigma)\tau_i(\beta_i)^\sigma - \tau_i(\beta_i).
$$
Applying $\lambda$ we see that $\lambda(\tau_1(\beta_1^\sigma/\beta_1)) = \tau_2(\beta_2^\sigma/\beta_2)$. 
But $\beta_i^\sigma/\beta_i \in \bmu_m$, so by \eqref{sl} we have 
$\beta_1^\sigma/\beta_1 = \beta_2^\sigma/\beta_2$.  Thus $(\beta_2/\beta_1)^\sigma = \beta_2/\beta_1$ 
for every $\sigma \in G_K$, so $\beta_2/\beta_1 \in K$.  

Let $x = \beta_2/\beta_1$.  Then $q_2^n = q_1 x^m$, and if we take $\pi_1 = \beta_1^2$ 
then $\pi_1^{2^k} = q_1$ and
$$
\tau_2(\pi_1x^{2}) = \tau_2(\pi_1\beta_2^{2}/\beta_1^{2}) = \tau_2(\beta_2^{2}) 
   = \lambda(\tau_1(\beta_1^2)) = \lambda(\tau_1(\pi_1)).
$$
This proves (ii).
\end{proof}

\subsection{The group scheme kernel of $p^k$}
Let $\O$ be the ring of integers of $K$ and $\F$ the residue field.  Let $\cE$ be the N\'eron model of $E$, 
so $\cE$ is a smooth group scheme over $\O$.
Let $\cE^0$ be its connected component at the identity, so $\cE^0$ is an open subgroup scheme of $\cE$.  
Let $\cE^0_{/\F} \subset \cE_{/\F}$   be the closed fibers of these group schemes, and 
$\Phi = \cE_{/\F}/\cE^0_{/\F}$ the group of components of the closed fiber, viewed as a (finite {\'e}tale) 
group scheme over ${\bf F}$.
Let $\cE[p^k]$ the kernel of multiplication by $p^k$ on $\cE$; 
this is a group scheme over $\O$ with finite generic fiber.

\begin{prop}
\label{gs}
Suppose that multiplication by $p$ induces a faithfully flat morphism $\cE \to \cE$ over $\O$.
Then $\cE[p^k]$ is a quasi-finite flat group scheme over $\O$, and 
$\image(\kappa_E)$ is the image of the composition
$$
\Hfppf^1(\Spec(\O),\cE[p^k]) \too \Hfppf^1(\Spec(K),\cE[p^k]) \isom H^1(K,E[p^k])
$$
where $\Hfppf^1$ means cohomology of abelian group schemes, computed in the $\mathrm{fppf}$ topology.
\end{prop}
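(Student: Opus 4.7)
The plan is to compare the fppf Kummer sequence for the N\'eron model $\cE$ with the classical Kummer sequence for $E$ over $K$, and reduce the identification of $\image(\kappa_E)$ to a vanishing statement about the cohomology of the component group.

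First, I would verify the quasi-finite flatness of $\cE[p^k]$. Iterating the hypothesis, $[p^k]=[p]^k$ is faithfully flat on $\cE$, so its kernel $\cE[p^k]$ is $\O$-flat. Quasi-finiteness is a fiberwise check: on each geometric fiber $[p^k]$ is a surjective homomorphism between smooth one-dimensional commutative group schemes, forcing $\cE[p^k]$ to have zero-dimensional, hence finite, fibers.

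Next, the faithful flatness of $[p^k]$ makes $0\to\cE[p^k]\to\cE\to\cE\to 0$ (with right map $[p^k]$) exact as a sequence of fppf sheaves on $\Spec\O$. The induced long exact sequence, together with the classical Kummer long exact sequence on $\Spec K$ and the N\'eron universal property $\cE(\O)=E(K)$, assembles into a commutative diagram with exact rows
$$
\xymatrix@R=12pt@C=15pt{
0 \ar[r] & E(K)/p^k E(K) \ar[r] \ar@{=}[d] & \Hfppf^1(\O,\cE[p^k]) \ar[r] \ar[d] & \Hfppf^1(\O,\cE)[p^k] \ar[r] \ar[d] & 0 \\
0 \ar[r] & E(K)/p^k E(K) \ar[r]^-{\kappa_E} & H^1(K,E[p^k]) \ar[r] & H^1(K,E)[p^k] \ar[r] & 0
}
$$
in which the middle vertical is the composition appearing in the statement, and the right vertical is restriction to the generic fiber. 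The inclusion $\image(\kappa_E)\subset$ (image of composition) is immediate, and the reverse inclusion reduces to the vanishing $\Hfppf^1(\O,\cE)[p^k]=0$.

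This vanishing is the crux of the proof. Because $\cE$ is smooth, fppf cohomology agrees with \'etale cohomology, and Henselian base change gives $\Hfppf^1(\O,\cE)\cong H^1(\F,\cE_{/\F})$. Combining the exact sequence $0\to\cE^0_{/\F}\to\cE_{/\F}\to\Phi\to 0$ with Lang's theorem, applied to the smooth connected commutative $\F$-group $\cE^0_{/\F}$, yields $H^1(\F,\cE^0_{/\F})=0$, hence $\Hfppf^1(\O,\cE)\cong H^1(\F,\Phi)$. Finally, the hypothesis that $[p]$ is faithfully flat on $\cE$ restricts on the special fiber to the surjectivity of $[p]\colon\Phi\to\Phi$ (using that $[p]$ is always surjective on the connected part $\cE^0_{/\F}$); being a surjective endomorphism of a finite group, this is bijective, so $\Phi[p]=0$ and $|\Phi|$ is prime to $p$. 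Consequently $H^1(\F,\Phi)$ is killed by $|\Phi|$, so its $p^k$-torsion vanishes. The main obstacle I anticipate is this last step: correctly stringing together smoothness to pass to \'etale cohomology, Henselian base change, and Lang's theorem, and translating the faithful-flatness hypothesis into the component-group statement $\Phi[p]=0$.
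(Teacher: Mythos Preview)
Your proposal is correct and follows essentially the same approach as the paper: set up the fppf Kummer sequence for $\cE$, compare with the classical one over $K$, and reduce to $\Hfppf^1(\O,\cE)[p^k]=0$ via the identification with $H^1(\F,\Phi)$ and the observation $\Phi[p]=0$. The paper obtains the identification $\Hfppf^1(\O,\cE)\cong H^1(\F,\Phi)$ by citing \cite[Lemma 5.1(iii)]{rpav}, whereas you unpack it yourself via smoothness, Hensel, and Lang's theorem; your derivation of $\Phi[p]=0$ from the faithful-flatness hypothesis is also more explicit than the paper's one-line assertion.
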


\begin{proof}
This follows from \cite[Lemma 5.1(iii)]{rpav}.  The long exact cohomology sequence attached to 
$0 \to \cE[p^k] \to \cE \map{p^k} \cE \to 0$ gives
\begin{multline*}
\too \Hfppf^0(\Spec(\O),\cE) \map{\;p^k\;} \Hfppf^0(\Spec(\O),\cE) \\
   \too \Hfppf^1(\Spec(\O),\cE[p^k]) \too \Hfppf^1(\Spec(\O),\cE)[p^k].
\end{multline*}
We have $\Hfppf^0(\Spec(\O),\cE) = E(K)$, and $\Hfppf^1(\Spec(\O),\cE)[p^k] = H^1(\F,\Phi)[p^k]$ by 
\cite[Lemma 5.1(iii)]{rpav}.  It follows from our assumption on $p$ that $\Phi[p^k] = 0$, so the upper left vertical 
map in the following commutative diagram is an isomorphism:
$$
\xymatrix@R20pt{
\Hfppf^0(\Spec(\O),\cE)/p^k\Hfppf^0(\Spec(\O),\cE) \ar^-{\cong}[r]\ar_{\cong}[d] & E(K)/p^kE(K) \ar^{\kappa_E}[dd] \\
\Hfppf^1(\Spec(\O),\cE[p^k]) \ar[d] \\
\Hfppf^1(\Spec(K),\cE[p^k]) \ar[r] & H^1(K,E[p^k]).
}
$$
The proposition follows.
\end{proof}

\begin{rem}
If $K$ has residue characteristic $p$, then multiplication by $p$ induces a 
faithfully flat endomorphism of $\cE$ if and only if $E$ has good or multiplicative 
reduction and $p$ does not divide the order of $\Phi$. 
\end{rem}

\section{Proof of Theorem \ref{comp}}
\label{proofsect}

The following is a slightly stronger version of Theorem \ref{comp}.  
We will prove Theorem \ref{comp2} below, and then derive Theorem \ref{comp} from it.
Fix a prime power $p^k$ and a number field $K$.  Recall that for every field $F$, 
we let $\Xset(F) = \Hom(G_F,\{\pm1\})$.

\begin{thm}
\label{comp2}
Suppose $E_1$ and $E_2$ are elliptic curves over $K$.  Let 
$S_i$ be the set of primes of $K$ where $E_i$ has 
potentially multiplicative reduction.
Let $m = p^{k+1}$ if $p \le 3$, and $m = p^k$ if $p>3$.  
Suppose further that:
\begin{enumerate}
\item
there is a $G_K$-isomorphism $E_1[m] \cong E_2[m]$,
\item
$S_1 = S_2$, 
\item
for all $\l \in S_1 = S_2$, the isomorphism of (i) sends $\cC_{E_1/K_\l}[m]$ to $\cC_{E_2/K_\l}[m]$,
\item
for every prime $v$ above $p$, either 
\begin{itemize}
\item
$E_1$ and $E_2$ have potentially multiplicative reduction at $v$, or
\item
$p > 2$, $E_1$ and $E_2$ have good reduction at $v$, and the isomorphism of (i) 
extends to an isomorphism $\cE_1[p^k] \cong \cE_2[p^k]$ over $\Spec(\O_{K_v})$, 
where the group scheme $\cE_i[p^k]$ is the kernel of multiplication by $p^k$ in the 
N\'eron model of $E_i$.
\end{itemize}
\end{enumerate}
Then for every finite extension $F$ of $K$, and every $\chi \in \Xset(F)$, 
there is a canonical isomorphism
$$
\Sel_{p^k}(E_1^\chi/F) \cong \Sel_{p^k}(E_2^\chi/F).
$$
In particular $E_1$ and $E_2$ are $p^k$-Selmer companions over every number field containing $K$. 
\end{thm}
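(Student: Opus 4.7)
The plan is to prove Theorem \ref{comp2} by showing that the two Selmer groups are literally equal as subgroups of a common cohomology group, which reduces the theorem to verifying that local Kummer conditions match at every place of $F$. Since $m \ge p^k$, hypothesis (i) yields, by reduction modulo $p^k$, a $G_K$-isomorphism $E_1[p^k] \cong E_2[p^k]$. For any finite extension $F/K$ and any $\chi \in \Xset(F)$, the module $E_i^\chi[p^k]$ is obtained from $E_i[p^k]$ by twisting the $G_F$-action by $\chi$, so the isomorphism above remains $G_F$-equivariant after twisting and produces a $G_F$-isomorphism $E_1^\chi[p^k] \cong E_2^\chi[p^k]$. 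Passing to cohomology gives canonical identifications of the global and local $H^1$ groups with $E_i^\chi[p^k]$ as coefficients. Since $\Sel_{p^k}(E_i^\chi/F)$ is defined as the set of classes whose localization at every place $v$ lies in $\image(\kappa_{E_i^\chi/F_v})$, the theorem reduces to the purely local claim
$$
\image(\kappa_{E_1^\chi/F_v}) \;=\; \image(\kappa_{E_2^\chi/F_v}) \quad \text{for every place } v \text{ of } F.
$$

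I would verify this local claim by case analysis on $v$, using the structural results of Sections 2 and 5. At archimedean $v$ and at nonarchimedean $v \nmid p$ at which $E_i^\chi$ has good reduction, Lemma \ref{12.2} (resp.\ the identification with the unramified subgroup $H^1_\ur(F_v, E_i^\chi[p^k])$) describes the image intrinsically in terms of the $G_{F_v}$-module, so equality is automatic. At nonarchimedean $v \nmid p$ at which $E_i^\chi$ has bad potentially good reduction (either intrinsically or because $\chi$ is ramified at $v$), Lemma \ref{12.2}(i,ii) gives an intrinsic description of the image in terms of $p^{k+1}$-torsion cocycles when $p \le 3$ and gives triviality when $p > 3$; this is precisely the reason to require $m = p^{k+1}$ for $p \le 3$, and Lemma \ref{samered} ensures the reduction types of $E_1^\chi$ and $E_2^\chi$ agree at such $v$. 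At $v \in S_1 = S_2$, the descriptions of the image in Lemmas \ref{3.4} and \ref{3.6} in terms of the Tate parameter, the splitting character, and the map $\tau_{E_i/F_v}|_{\bmu_{p^k}}$, combined with Lemma \ref{3.7} and hypothesis (iii) (matching of canonical subgroups), yield the desired equality. Finally, at $v \mid p$ with good reduction, hypothesis (iv) of Theorem \ref{comp2} supplies an isomorphism $\cE_1[p^k] \cong \cE_2[p^k]$ of finite flat group schemes over $\Spec(\O_{F_v})$, and Proposition \ref{gs} identifies the Kummer image with the image of $\Hfppf^1(\Spec(\O_{F_v}), \cE_i[p^k])$; this equality persists after the quadratic twist by $\chi$ via an analogous group scheme argument over the quadratic extension cut out by $\chi|_{F_v}$.

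The main technical obstacle is the case $v \in S_i$ with $p = 2$: by Lemma \ref{3.6}(ii), the image receives an extra contribution from a cocycle built on a $2^{k+1}$-st root of the Tate parameter when the parameter lies in $N_{L/F_v} L^\times$. Matching these extra cocycles between $E_1^\chi$ and $E_2^\chi$ requires the full precision of Lemma \ref{3.7}(ii), which gives agreement of Tate parameters up to norms and $m$-th powers, and this is the reason that $m$ must be taken to be $2^{k+1}$ rather than $2^k$. Once the local verification is complete at every place, Theorem \ref{comp2} follows. Finally, Theorem \ref{comp} is deduced from Theorem \ref{comp2} by observing that its good-reduction hypothesis (iv) ($k=1$, $E_i$ having good reduction at $\p$, and $e(\p/p) < p-1$ with $p > 2$) forces the $G_{K_\p}$-isomorphism $E_1[p] \cong E_2[p]$ to extend to a group scheme isomorphism $\cE_1[p] \cong \cE_2[p]$ over $\O_{K_\p}$, via Raynaud's classification of finite flat group schemes killed by $p$ over discrete valuation rings of mixed characteristic $(0, p)$ with absolute ramification $e < p-1$.
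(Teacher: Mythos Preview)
Your proposal is correct and follows essentially the same approach as the paper: reduce to equality of local Kummer images, then handle the places by the same case analysis (archimedean, good reduction away from $p$, bad potentially good, potentially multiplicative, and $v\mid p$ with good reduction), invoking exactly the lemmas you name. The paper streamlines Cases 1--4 by observing that the hypotheses of Theorem \ref{comp2} are preserved under quadratic twist (so one may take $\chi=1$ there), and in Case 5 it makes your ``analogous group scheme argument over the quadratic extension'' explicit by applying Proposition \ref{gs} over $L$ and identifying $\image(\kappa_{E_i^\chi/K_v})$ with the minus eigenspace $\image(\kappa_{E_i/L})^-$ under $\Gal(L/K_v)$.
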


\begin{defn}
\label{seldef}
The {\em $p^k$-Selmer group} $\Sel_{p^k}(E/K)\subset H^1(K,E[p^k])$ of an elliptic curve $E$ over $K$ is 
$$
\Sel_{p^k}(E/K) := \ker\bigl(H^1(K,E[p^k]) \too \dirsum{v}H^1(K_v,E[p^k])/\image(\kappa_{E/K_v})\bigr).
$$
\end{defn}

A $G_K$-isomorphism $E_1[p^k] \cong E_2[p^k]$ allows us to identify 
$$
\text{$H^1(K,E_1[p^k]) = H^1(K,E_2[p^k])$, \; $H^1(K_v,E_1[p^k]) = H^1(K_v,E_2[p^k])$ for every $v$}.
$$ 
We will show that under the hypotheses of Theorems \ref{comp} and \ref{comp2}, 
with these identifications, for every $v$ and for every $\chi\in\Xset(K_v)$, 
the images of the horizontal Kummer maps
\begin{equation}
\label{kd}
\raisebox{15pt}{
\xymatrix@R=10pt{
E_1^\chi(K_v) \ar^-{\kappa_{E_1^\chi/K_v}}[r] &  H^1(K_v,E_1[p^k]) \ar@{=}[d] \\
E_2^\chi(K_v) \ar^-{\kappa_{E_2^\chi/K_v}}[r] &  H^1(K_v,E_2[p^k])
}}
\end{equation}
are equal.  It will then follow from the definition 
that $\Sel_{p^k}(E_1^\chi/K) = \Sel_{p^k}(E_2^\chi/K)$ inside $H^1(K,E_1[p^k]) = H^1(K,E_2[p^k])$ 
for every $\chi$.  This gives the canonical isomorphisms referred to in 
Theorems \ref{comp} and \ref{comp2}.

\begin{proof}[Proof of Theorem \ref{comp2}]
Suppose $E_1$ and $E_2$ satisfy the hypotheses of Theorem \ref{comp2}.
We will show that $\image(\kappa_{E_1^\chi/K_v}) = \image(\kappa_{E_2^\chi/K_v})$ for every 
place $v$ of $K$ and every $\chi \in \Xset(K)$.  
Note that except in the case where $v \mid p$ and $E_1, E_2$ have good reduction at $p$ 
(Case 5 below), 
if the pair $(E_1, E_2)$ satisfies the hypotheses of Theorem \ref{comp2}, then so does the 
pair $(E_1^\chi, E_2^\chi)$ for every $\chi\in\Xset(K)$.  Thus in Cases 1-4, it is enough to consider 
$\chi = 1$.  We will identify $E_1[p^k]$ with $E_2[p^k]$ using the given 
isomorphism, and denote this $G_K$-module simply by $E[p^k]$.

We split the proof into several cases.

\medskip\noindent{\em Case 1: $v \mid \infty$.}
By Lemma \ref{12.2}, $\image(\kappa_{E_i/K_v})$ is zero if $p > 2$, and 
is completely determined by the $G_K$-module 
$E_i[2^{k+1}]$ if $p = 2$.  But if $p=2$ then by assumption $E_1[2^{k+1}] \cong E_2[2^{k+1}]$ as $G_K$-modules, 
so $\image(\kappa_{E_1/K_v}) = \image(\kappa_{E_2/K_v})$.

\medskip\noindent{\em Case 2: $v \nmid \infty$ and $E_1$ has bad, 
potentially good reduction at $v$.} 
By our assumptions, $v \nmid p$ and $E_2$ also has potentially good reduction at $v$. 
By Lemma \ref{samered}, the reduction of $E_2$ is also bad.  
By Lemma \ref{12.2}, we again see that 
$\image(\kappa_{E_i/K_v})$ is zero if $p > 3$, and in general 
is completely determined by the $G_K$-module 
$E_i[m]$, so $\image(\kappa_{E_1/K_v}) = \image(\kappa_{E_2/K_v})$.

\medskip\noindent{\em Case 3: $v \nmid p\infty$ and $E_1$ has good reduction at $v$.} 
By our assumptions, $E_2$ has potentially good reduction at $v$. 
By Lemma \ref{samered}, the reduction of $E_2$ is also good.
It follows that for $i = 1, 2$, the image of $\kappa_{E_i/K_v}$ is the unramified subgroup 
of $H^1(K_v,E[p^k])$ (see \cite[Lemma 4.1]{cassels}), i.e.,
$$
\image(\kappa_{E_i/K_v}) = \ker\bigl(H^1(K_v,E[p^k]) \to H^1(K_v^\ur,E[p^k])\bigr)
$$
where $K_v^\ur$ is the maximal unramified extension of $K_v$.  Since this subgroup 
is independent of $i$, we have $\image(\kappa_{E_1/K_v}) = \image(\kappa_{E_2/K_v})$.

\medskip\noindent{\em Case 4: $v \nmid \infty$ and $E_1$ has potentially multiplicative 
reduction at $v$.} 
By our assumptions, $E_2$ also has potentially multiplicative reduction at $v$, 
and the images of $H^1(K_v,\bmu_{p^k}) \to H^1(K_v,E_i[p^k])$ are identified 
by the isomorphism $E_i[p^k] \cong E_2[p^k]$.

Suppose first that $E_1$ has split multiplicative reduction.  Then so does $E_2$ 
(by Lemma \ref{3.7}(i)), and then Lemma \ref{3.4} shows that 
$\image(\kappa_{E_1/K_v}) = \image(\kappa_{E_2/K_v})$.

Now suppose $E_1$ does not have split multiplicative reduction, and let $L$ be the 
quadratic extension corresponding to the splitting character of $E_1/K_v$.  
By Lemma \ref{3.7}(i), the splitting characters of $E_1$ and $E_2$ are equal.  
Let $q_i$ be the Tate parameter of $E_i$.
By Lemma \ref{3.7}(ii), if $p = 2$ we have $q_1 \in N_{L/K_v}L^\times \iff q_2 \in N_{L/K_v}L^\times$.

If $p > 2$, or if $p = 2$ and $q_1, q_2 \notin N_{L/K_v}L^\times$, then it follows from 
Lemma \ref{3.6}(i) that $\image(\kappa_{E_1/K_v}) = \image(\kappa_{E_2/K_v})$.

Finally, suppose that $p = 2$ and $q_1 \in N_{L/K_v}L^\times$, say $q_1 = N_{L/K_v}(\beta_1)$.  
Let $x\in K_v^\times$ and the odd integer $n$ be as in Lemma \ref{3.7}(ii), so $q_2^n = q_1x^m$, and 
we set $\beta_2 = (\beta_1x^{2^k})^n$ so $N_{L/K_v}(\beta_2) = q_2^n$.  
Fix $\alpha_1 \in \bar{K}_v^\times$ with $\alpha_1^{2^k} = \beta_1$, and set $\alpha_2 = (\alpha_1x)^n$, 
so $\alpha_2^{2^k} = \beta_2$.
Using Lemma \ref{3.6}(ii), to show that $\image(\kappa_{E_1/K_v}) = \image(\kappa_{E_2/K_v})$ 
it is enough to show that the two cocycles
$$
c_i : \sigma \mapsto \tau_{E_i/K_v}((\alpha_i^\sigma)^{\psi(\sigma)}/\alpha_i)
$$
generate the same subgroup of $H^1(K_v,E[2^k])$, for $i = 1, 2$.

If $\sigma$ fixes $L$ (i.e., $\psi(\sigma) = 1$), then 
$((\alpha_i^\sigma)^{\psi(\sigma)}/\alpha_i)^{2^k} = \beta_i^\sigma/\beta_i = 1$, so 
$$
(\alpha_2^\sigma)^{\psi(\sigma)}/\alpha_2 = \alpha_2^\sigma/\alpha_2
   = ((x^\sigma \alpha_1^\sigma)/(x\alpha_1))^n 
   = (\alpha_1^\sigma/\alpha_1)^n = ((\alpha_1^\sigma)^{\psi(\sigma)}/\alpha_1)^n \in \bmu_{2^k},
$$
since $x \in K_v$.  By hypothesis (iii) we have 
$$
\tau_{E_1/K_v}(\bmu_{2^k}) = \cC_{E_1/K_v}[2^k] = \cC_{E_2/K_v}[2^k] = \tau_{E_2/K_v}(\bmu_{2^k})
$$ 
inside $E_1[2^k] = E_2[2^k]$, and $n$ is odd, so $c_1$ and $c_2$ generate the same subgroup of $H^1(K_v,E[2^k])$.

If $\sigma$ does not fix $L$ (i.e., $\psi(\sigma) = -1$), then 
$(\alpha_i^\sigma)^{\psi(\sigma)}/\alpha_i = 1/(\alpha_i^\sigma\alpha_i)$, and 
$(\alpha_1^\sigma\alpha_1)^{2^k} = \beta_1^\sigma\beta_1 = N_{L/K_v}(\beta_1) = q_1$, 
so by Lemma \ref{3.7}(ii) applied with $\pi_1 = \alpha_i^\sigma\alpha_i$ we have
\begin{multline*}
\tau_{E_1/K_v}((\alpha_1^\sigma)^{\psi(\sigma)}/\alpha_1) 
   = \tau_{E_2/K_v}(x^{-2} (\alpha_1^\sigma)^{\psi(\sigma)}/\alpha_1) \\
   = \tau_{E_2/K_v}(((\alpha_2^\sigma)^{\psi(\sigma)}/\alpha_2)^n)
      = n\tau_{E_2/K_v}((\alpha_2^\sigma)^{\psi(\sigma)}/\alpha_2). 
\end{multline*}
Thus $c_1 = nc_2$ in $H^1(K_v,E[2^k])$, and $n$ is odd, so $c_1$ and $c_2$ generate the same subgroup.
Thus  $\image(\kappa_{E_1/K_v}) = \image(\kappa_{E_2/K_v})$ in Case 4.

\medskip\noindent{\em Case 5: $v \mid p$, $p > 2$, $E_1$ and $E_2$ have good reduction at $v$, 
and the isomorphism $E_1[p^k] \cong E_2[p^k]$ extends to an isomorphism $\cE_1[p^k] \cong \cE_2[p^k]$.} 
In this case Proposition \ref{gs} shows that $\image(\kappa_{E_1/K_v}) = \image(\kappa_{E_2/K_v})$.
More generally, suppose $\chi \in \Xset(K_v)$, and let $L$ be the quadratic extension of $K_v$ 
cut out by $\chi$.  Then Proposition \ref{gs} shows that $\image(\kappa_{E_1/L}) = \image(\kappa_{E_2/L})$, 
and this isomorphism preserves the natural action of $\Gal(L/K_v)$.
Let $\sigma$ denote the nontrivial element of $\Gal(L/K_v)$, and for 
every $\Zp[\Gal(L/K_v)]$-module $M$, define $M^- := \{m \in M : m^\sigma = -m\}$.
Since $p > 2$, 
one verifies easily using the Hochschild-Serre spectral sequence that for $i = 1, 2$ we have
\begin{gather*}
E_i^\chi(K_v)/p^k E_i^\chi(K_v) = (E_i(L)/p^k E_i(L))^- , \quad H^1(K_v,E_i^\chi[p^k]) = H^1(L,E_i[p^k])^-, \\
   \image(\kappa_{E_i^\chi/K_v}) = \image(\kappa_{E_i/L})^-
\end{gather*}
and we conclude that $\image(\kappa_{E_1^\chi/K_v}) = \image(\kappa_{E_2^\chi/K_v})$.

\medskip
Thus $\image(\kappa_{E_1^\chi/K_v}) = \image(\kappa_{E_2^\chi/K_v})$ in \eqref{kd} 
for every $v$ and every $\chi$, 
and we conclude that $E_1$ and $E_2$ are $p$-Selmer companions over $K$.  

If $F$ is a finite extension of $K$, then $E_1$ and $E_2$ satisfy the 
hypotheses of Theorem \ref{comp2} over $F$ as well.  Applying the proof above with $F$ 
in place of $K$ shows that $E_1$ and $E_2$ are $p$-Selmer companions over $F$.
\end{proof}

\begin{proof}[Proof of Theorem \ref{comp}]
Theorem \ref{comp} is identical to Theorem \ref{comp2} except for the second part of hypothesis (iv).
Suppose $k=1$, $\p \mid p$, $E_1, E_2$ have good reduction at $\p$, and the ramification 
$e(\p/p) < p-1$ (so in particular $p > 2$).  By Raynaud's theorem 
\cite[\S3.5.5]{raynaud}, the group scheme $\cE_i[p]$ is determined by the Galois module $E_i[p]$.  
Hence in this case the isomorphism $E_1[p] \cong E_2[p]$ necessarily extends to an isomorphism 
$\cE_1[p] \cong \cE_2[p]$.  Thus  
if hypotheses (i) and (iv) of Theorem \ref{comp} are satisfied, then hypothesis (iv) of Theorem \ref{comp2} 
is satisfied.   
In this way Theorem \ref{comp} follows from Theorem \ref{comp2}. 
\end{proof}

\section{Additional questions and remarks}
\label{converse}

In this section we discuss some related questions, including partial converses 
to Theorem \ref{comp}.  Fix a number field $K$.

\subsection{The number of companions}

\begin{prop}
\label{justfin}
Fix an elliptic curve $E_1$ over $K$, and a prime power $p^k > 2$.  There are only finitely many 
elliptic curves $E_2/K$ (up to isomorphism) satisfying hypotheses (i) and (ii) of Theorem \ref{comp}.
\end{prop}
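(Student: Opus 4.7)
The plan is to reduce the proposition to Shafarevich's finiteness theorem, which asserts that there are only finitely many isomorphism classes of elliptic curves over $K$ with good reduction outside any fixed finite set of primes. It thus suffices to exhibit a finite set $T$ of primes of $K$, depending only on $E_1$ and $m$, such that every $E_2$ satisfying hypotheses (i) and (ii) has good reduction outside $T$.

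Let $T$ be the (finite) union of the primes of $K$ above rational primes dividing $m$ with the primes at which $E_1$ has bad reduction. Fix a prime $\ell \notin T$. Then $\ell \nmid m$ and $E_1$ has good reduction at $\ell$; by hypothesis (ii), $\ell \notin S_1 = S_2$, so $E_2$ does not have potentially multiplicative reduction at $\ell$ and therefore has potentially good reduction there. The N\'eron--Ogg--Shafarevich criterion applied to $E_1$ shows that $E_1[m]$ is unramified at $\ell$ (since $\ell \nmid m$), and hypothesis (i) then transfers this to $E_2[m]$. The condition $p^k > 2$ forces $m \ge 5 \ge 3$, so Lemma \ref{samered} applies to the pair $E_1, E_2$ over $K_\ell$ and yields that $E_2$, like $E_1$, has good reduction at $\ell$. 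Hence every such $E_2$ has good reduction outside $T$, and Shafarevich's theorem supplies the desired finiteness.

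There is no substantial obstacle here: the argument is a short piece of bookkeeping together with two inputs, namely Lemma \ref{samered} (used to upgrade ``potentially good reduction with unramified $m$-torsion'' to ``good reduction'') and Shafarevich's theorem. The only point deserving care is verifying, for each $\ell \notin T$, the three hypotheses of Lemma \ref{samered} --- that $m \ge 3$, that $\ell \nmid m$, and that both $E_1$ and $E_2$ have potentially good reduction at $\ell$ --- each of which follows from our choice of $T$ together with hypothesis (ii).
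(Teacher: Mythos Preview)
Your proof is correct and follows essentially the same route as the paper's: define the finite set of primes consisting of the bad primes of $E_1$ together with the primes above $p$, use hypothesis (ii) and Lemma \ref{samered} to conclude that any $E_2$ satisfying (i) and (ii) has good reduction outside this set, and then invoke Shafarevich's finiteness theorem. The only cosmetic differences are that the paper phrases the application of Lemma \ref{samered} contrapositively (if $E_2$ has additive potentially good reduction at $\l$ then so does $E_1$), and your sentence invoking N\'eron--Ogg--Shafarevich is not needed since Lemma \ref{samered} already packages that input.
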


\begin{proof}
Let 
$$
\Sigma = \{\text{primes $\l$ of $K$ : $E_1$ has bad reduction at $\l$}\} \cup \{\l : \l \mid p\}.
$$ 
Suppose that $E_1$ and $E_2$ satisfy (i) and (ii) of Theorem \ref{comp}, and   
$\l$ is a prime of $K$ not dividing $p$.  If $E_2$ has potentially multiplicative reduction at $\l$, 
then $\l \in S_2 = S_1 \subset \Sigma$.
If $E_2$ has additive, potentially good reduction at $\l$, then so does $E_1$ by Lemma \ref{samered}, 
so $\l \in \Sigma$.  Thus $E_2$ has good reduction at all primes outside of $\Sigma$.  
By the ``Shafarevich Conjecture'' (proved by Faltings), there are only finitely many such 
elliptic curves $E_2/K$.
\end{proof}

\subsection{Selmer parity companions}

\begin{defn}
If $p$ is a prime, 
we will say that two elliptic curves $E_1, E_2$ over $K$ are 
{\em $p$-Selmer parity companions} if for every quadratic character $\chi$ of $K$ 
we have
$$
\dim_{\F_p}\Sel_p(E_1^\chi/K) \equiv \dim_{\F_p}\Sel_p(E_2^\chi/K) \pmod{2}.
$$ 
\end{defn}

\begin{rem}
Clearly, if $E_1$ and $E_2$ are $p$-Selmer companions, then they are also $p$-Selmer parity companions.
\end{rem}

\begin{defn}
For every elliptic curve $E/K$, define a set of rational primes
$$
T(E) := \{p :  \text{$E^\chi(K)[p] \ne 0$ for at least one quadratic character $\chi$ of $K$}\}.
$$
\end{defn}

\begin{lem}
\label{fint}
For every elliptic curve $E/K$, the set $T(E)$ is finite.
\end{lem}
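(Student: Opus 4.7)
The plan is to reduce finiteness of $T(E)$ to a divisibility constraint imposed by a single Frobenius. First, I would handle $p=2$ separately: since $\chi^2=1$ and $-1=1$ in $\F_2$, one has $E^\chi[2]\cong E[2]$ as $G_K$-modules for every quadratic character $\chi$, so $2\in T(E)$ iff $E(K)[2]\ne 0$; in any case $p=2$ contributes at most a single element to $T(E)$.

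For odd $p$ the essential input is the identification $E^\chi[p]\cong E[p]\otimes\chi$ of $G_K$-modules (twisting the action of $G_K$ by the quadratic character $\chi$). Thus $p\in T(E)$ with $p$ odd means exactly that there is a nonzero $t\in E[p]$ and a quadratic $\chi\in\Xset(K)$ with $\rho_{E,p}(\sigma)\,t=\chi(\sigma)\,t$ for all $\sigma\in G_K$, where $\rho_{E,p}$ is the mod-$p$ representation attached to $E$. In other words, $\rho_{E,p}$ has a $G_K$-stable line on which $G_K$ acts through a character of order at most $2$.

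Next I would fix once and for all a single auxiliary prime $\l_0$ of $K$ at which $E$ has good reduction and with $N\l_0\ge 2$; such primes exist since $E$ has good reduction outside a finite set. Take any odd $p\in T(E)$ whose residue characteristic differs from that of $\l_0$, with $t$ and $\chi$ as above. Since $\rho_{E,p}$ is unramified at $\l_0$, so is its restriction to $\langle t\rangle$, which forces $\chi$ to be unramified at $\l_0$, so $\chi(\Frob_{\l_0})\in\{\pm 1\}$ is well defined. Using $\det\rho_{E,p}=\epsilon_p$ together with $\chi^2=1$, the two eigenvalues of $\rho_{E,p}(\Frob_{\l_0})$ on $E[p]$ must be $\chi(\Frob_{\l_0})$ and $\chi(\Frob_{\l_0})\cdot N\l_0$ in $\F_p$, and hence
$$
a_{\l_0}(E)\;\equiv\;\pm(1+N\l_0)\pmod p,
$$
so $p$ divides the fixed integer $M:=a_{\l_0}(E)^2-(1+N\l_0)^2$. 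The Hasse bound $|a_{\l_0}(E)|\le 2\sqrt{N\l_0}<1+N\l_0$ shows $M\ne 0$, so I would conclude that $T(E)$ is contained in the finite set $\{2\}\cup\{\text{residue characteristic of }\l_0\}\cup\{\text{prime divisors of }M\}$.

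The main, and really the only, point needing attention is the observation that $\chi$ must be unramified at $\l_0$ so that $\chi(\Frob_{\l_0})$ is meaningful; this follows immediately from good reduction of $E$ at $\l_0$ together with the fact that $\langle t\rangle$ is $G_K$-stable. Everything else is a two-line eigenvalue computation, and the Hasse bound does the rest.
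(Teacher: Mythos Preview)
Your argument is correct. The paper does not supply its own proof here; it simply cites \cite[Proposition~1]{gouvea-mazur} for $K=\Q$ and \cite[Lemma~5.5]{MRstablerank} in general. Your Frobenius-trace-plus-Hasse-bound argument is essentially the proof given in the second of those references, so there is no substantive divergence to report. (One cosmetic point: the condition $N\l_0\ge 2$ is automatic for any nonzero prime of $\O_K$, so you need not single it out.)
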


\begin{proof}
This is \cite[Proposition 1]{gouvea-mazur} when $K = \Q$, and \cite[Lemma 5.5]{MRstablerank} in general.
\end{proof}

Recall that the Shafarevich-Tate conjecture asserts that the Shafarevich-Tate groups of all 
elliptic curves over $K$ are finite.

\begin{prop}
\label{indofp}
Suppose that either $K = \Q$, or the Shafarevich-Tate conjecture holds for $K$.  
If $E_1$, $E_2$ are elliptic curves over $K$, 
then $E_1$ and $E_2$ are $p$-Selmer parity companions for one prime $p \notin T(E_1) \cup T(E_2)$ 
if and only if they are $p$-Selmer parity companions for every prime $p \notin T(E_1) \cup T(E_2)$.
\end{prop}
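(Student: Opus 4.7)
The plan is to reduce the biconditional to showing that, for a fixed elliptic curve $E/K$ and fixed $\chi \in \Xset(K)$, the parity of $\dim_{\F_p}\Sel_p(E^\chi/K)$ is independent of the prime $p \notin T(E)$. Once this is in hand, the companion condition $\dim_{\F_p}\Sel_p(E_1^\chi/K) \equiv \dim_{\F_p}\Sel_p(E_2^\chi/K) \pmod{2}$ at a single $p \notin T(E_1)\cup T(E_2)$ is automatically equivalent to the same condition at every such $p$, and taking the conjunction over $\chi$ gives the proposition.

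From the standard exact sequence
$$0 \to E^\chi(K)/pE^\chi(K) \to \Sel_p(E^\chi/K) \to \Sh(E^\chi/K)[p] \to 0$$
and the observation that $T(E^\chi)=T(E)$ (the twists of $E^\chi$ are precisely the twists of $E$), the hypothesis $p\notin T(E)$ gives $E^\chi(K)[p]=0$; hence $E^\chi(K)/pE^\chi(K) \cong \F_p^{r_\chi}$ with $r_\chi := \rk E^\chi(K)$, and
$$\dim_{\F_p}\Sel_p(E^\chi/K) = r_\chi + \dim_{\F_p}\Sh(E^\chi/K)[p].$$
Since $r_\chi$ is patently independent of $p$, the task reduces to controlling the parity of $\dim_{\F_p}\Sh(E^\chi/K)[p]$.

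If the Shafarevich--Tate conjecture holds over $K$, then the Cassels--Tate pairing on $\Sh(E^\chi/K)$ is alternating and non-degenerate (Flach), forcing $\Sh(E^\chi/K) \cong \bigoplus_i (\Z/m_i \oplus \Z/m_i)$ and hence $\dim_{\F_p}\Sh(E^\chi/K)[p]$ even. Thus $\dim_{\F_p}\Sel_p(E^\chi/K) \equiv r_\chi \pmod{2}$, which is independent of $p$.

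For $K=\Q$ without assuming $\Sh$ is finite, decompose $\Sh(E^\chi/\Q)[p^\infty] \cong (\Q_p/\Z_p)^{c_\chi} \oplus F_\chi$ with $F_\chi$ finite; the alternating non-degenerate Cassels--Tate pairing on $\Sh/\Sh_{\mathrm{div}} \cong F_\chi$ makes $\dim_{\F_p}F_\chi[p]$ even, so
$$\dim_{\F_p}\Sel_p(E^\chi/\Q) \equiv r_\chi + c_\chi \equiv \cork_{\Z_p}\Sel_{p^\infty}(E^\chi/\Q) \pmod{2}.$$
The $p$-parity theorem over $\Q$ (Monsky for $p=2$; Nekov\'a\v{r} and the Dokchitser brothers for odd $p$) then identifies this parity with the one cut out by the global root number $w(E^\chi)$, an invariant of $E^\chi$ alone. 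The main obstacle is precisely this appeal: with $\Sh$ finite the argument is elementary, but the unconditional $K=\Q$ case is forced to invoke the $p$-parity theorem to achieve independence of $p$.
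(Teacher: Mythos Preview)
Your proof is correct and follows essentially the same route as the paper: reduce to showing that the parity of $\dim_{\F_p}\Sel_p(E^\chi/K)$ is independent of $p\notin T(E)$, use the Cassels pairing to kill the non-divisible contribution, and for $K=\Q$ invoke the $p$-parity theorem (the paper cites Dokchitser--Dokchitser) to identify the remaining parity with the root number. The only cosmetic difference is that you decompose via the Mordell--Weil/$\Sh$ exact sequence, whereas the paper works directly with $\Sel_{p^\infty}$ and its maximal divisible subgroup; the content is the same.
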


\begin{proof}
Suppose first that $K = \Q$, and for every elliptic curve $E/\Q$ define 
$$
\Sel_{p^\infty}(E/\Q) := \varinjlim \Sel_{p^k}(E/\Q) \quad\text{and}\quad r_p(E) := \cork_{\Zp}\Sel_{p^\infty}(E/\Q).
$$  
Let $D$ denote the maximal divisible subgroup of $\Sel_{p^\infty}(E/\Q)$.  
If $E(K)[p] = 0$, then $\Sel_{p}(E/\Q) = \Sel_{p^\infty}(E/\Q)[p]$, and so 
the Cassels pairing shows that
\begin{align*}
\dim_{\Fp}\Sel_{p}(E/\Q) &= r_p(E) + \dim_{\Fp}(\Sel_{p^\infty}(E/\Q)/D)[p]
   \equiv r_p(E) \pmod{2}.
\end{align*}
It is proved in \cite{dokdok} that $(-1)^{r_p(E)} = w(E)$, the global root number of 
the $L$-function of $E$.  
Thus the parity of $\dim_{\Fp}\Sel_{p}(E/\Q)$ is independent of $p \notin T(E)$ in this case.

Now let $K$ be arbitrary.  If the Shafarevich-Tate group of an elliptic curve $E$ over $K$ 
is finite, then the Cassels pairing shows that 
$$
\dim_{\F_p}\Sel_p(E/K) \equiv \rk(E(K)) + \dim_{\Fp}E(K)[p] \pmod{2}.
$$
Therefore in this case also, we have that the parity of $\dim_{\F_p}\Sel_p(E/K)$ 
is independent of $p \notin T(E)$.  The proposition follows easily.
\end{proof}

\begin{exa}
Consider the curves $E_1 = 1026N1$ and $E_2 = 1026O1$ 
over $\Q$ of the Introduction.  Then $E_1$ and $E_2$ are $7$-Selmer companions, 
so they are $7$-Selmer parity companions.  

We claim that $T(E_1)$ is empty and $T(E_2) = \{3\}$.  
Let $\rho_{i,p} : G_\Q \to \Aut(E_i[p])$ be the mod $p$ representation attached to $E_i$.
For every $p \le 4861$ one can check using Sage \cite{sage} 
that $\rho_{i,p}$ is surjective except when $p = 3$ and $i = 2$, 
and for $p > 4861$ we have that $\rho_{i,p}$ is surjective by \cite[Theorem 2]{cojocaru}.  
When $\rho_{i,p}$ is surjective, no quadratic twist $E_i^\chi$ will have a rational point 
of order $p$ because the mod $p$ representation attached to $E_i^\chi$ is $\rho_{i,p} \otimes \chi$, 
which is irreducible.  This proves the claim.

Thus Proposition \ref{indofp} shows that $E_1$ and $E_2$ are $p$-Selmer parity companions for 
every prime $p \ne 3$.  In fact, they are {\em not} $3$-Selmer parity companions 
because $\Sel_3(E_1/\Q) = 0$ and $\dim_{\F_3}\Sel_3(E_2/\Q) = 1$ (note that $E_2$ has a rational 
point of order $3$).

This shows that $p$-Selmer parity companions need not have the 
same Galois action on their $p$-torsion points.  
\end{exa}

We can use the methods and results of \cite{kmr} (see also \cite{kramer}) 
to determine when two elliptic curves are $2$-Selmer parity 
companions.  

For every elliptic curve $E/K$ and place $v$ of $K$, let $\omega_{E,v} : \Xset(K_v) \to \{\pm 1\}$ 
be the map (of sets, not in general a homomorphism) given by \cite[Definition  6.1]{kmr}.

\begin{thm}
\label{one}
Suppose $E_1$, $E_2$ are elliptic curves over $K$, and $\Sigma$ is a finite set of 
places of $K$ containing all places above $2$ and $\infty$, and all primes where either 
$E_1$ or $E_2$ has bad reduction.  Then $E_1$ and $E_2$ are 2-Selmer parity companions if
and only if $\dim_{\F_2}\Sel_2(E_1/K) \equiv \dim_{\F_2}\Sel_2(E_2/K) \pmod{2}$ and
$\omega_{E_1,v} = \omega_{E_2,v}$ for every $v \in \Sigma$.
\end{thm}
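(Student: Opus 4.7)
The plan is to exploit a twist-parity formula from \cite{kmr}, which for every elliptic curve $E/K$ and every $\chi\in\Xset(K)$ expresses the change in $2$-Selmer parity as a product of local signs:
$$
(-1)^{\dim_{\F_2}\Sel_2(E^\chi/K)\,-\,\dim_{\F_2}\Sel_2(E/K)} \;=\; \prod_v \omega_{E,v}(\chi_v),
$$
where $\chi_v$ is the restriction of $\chi$ to $G_{K_v}$ and almost all factors equal $+1$.  A second input I would extract from \cite{kmr} is that at any place $v\notin\Sigma$ (so $v\nmid 2\infty$ and both curves have good reduction at $v$), the sign $\omega_{E,v}(\chi_v)$ depends on $\chi_v$ alone and not on the particular elliptic curve with good reduction at $v$; equivalently, $\omega_{E_1,v}=\omega_{E_2,v}$ holds automatically for every $v\notin\Sigma$.

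Granting these two facts, the ``if'' direction is a direct computation. Multiplying the parity formula for $E_1$ by the one for $E_2$ gives
$$
(-1)^{\dim_{\F_2}\Sel_2(E_1^\chi/K)\,+\,\dim_{\F_2}\Sel_2(E_2^\chi/K)} = (-1)^{\dim_{\F_2}\Sel_2(E_1/K)\,+\,\dim_{\F_2}\Sel_2(E_2/K)}\prod_v\omega_{E_1,v}(\chi_v)\,\omega_{E_2,v}(\chi_v).
$$
By the first hypothesis the leading sign equals $+1$, every factor indexed by $v\in\Sigma$ equals $+1$ by the second hypothesis, and every factor indexed by $v\notin\Sigma$ equals $+1$ by universality.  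Hence $\dim_{\F_2}\Sel_2(E_1^\chi/K)\equiv\dim_{\F_2}\Sel_2(E_2^\chi/K)\pmod 2$ for every $\chi$.

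For the converse, evaluating the parity formula at $\chi=1$ (where $\omega_{E,v}(1)=1$ for every $v$) yields the congruence $\dim_{\F_2}\Sel_2(E_1/K)\equiv\dim_{\F_2}\Sel_2(E_2/K)\pmod 2$.  To produce the local identity $\omega_{E_1,v_0}=\omega_{E_2,v_0}$ at a chosen $v_0\in\Sigma$, I would fix an arbitrary $\eta_0\in\Xset(K_{v_0})$ and invoke the local-global principle for quadratic characters (Grunwald--Wang) to construct a global $\chi\in\Xset(K)$ with $\chi_{v_0}=\eta_0$ and $\chi_v=1$ for every other $v\in\Sigma$.  Applying the ratio identity above to this $\chi$, every factor at $v\neq v_0$ cancels (in $\Sigma\setminus\{v_0\}$ because $\chi_v=1$; outside $\Sigma$ by universality), isolating $\omega_{E_1,v_0}(\eta_0)\,\omega_{E_2,v_0}(\eta_0)=1$, as required.

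The principal obstacle is the Grunwald--Wang step: one cannot in general realize an arbitrary finite tuple of local quadratic characters by a global quadratic character, because of a single Hilbert-symbol-type parity constraint.  The remedy is standard: allow $\chi$ to be ramified at one or two auxiliary places $v'\notin\Sigma$, chosen so as to absorb the obstruction.  The universality of $\omega_{E,v'}$ at such places guarantees that $\omega_{E_1,v'}(\chi_{v'})\,\omega_{E_2,v'}(\chi_{v'})=1$, so the auxiliary ramification contributes nothing to the product and the isolation of $\omega_{E_1,v_0}(\eta_0)\,\omega_{E_2,v_0}(\eta_0)$ goes through unchanged.
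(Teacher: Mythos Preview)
Your argument is correct and is essentially an explicit unpacking of the paper's one-line proof, which simply cites \cite[Proposition~6.2]{kmr} (that proposition packages exactly the twist-parity formula $(-1)^{\dim\Sel_2(E^\chi)-\dim\Sel_2(E)}=\prod_v\omega_{E,v}(\chi_v)$ together with the universality of $\omega_{E,v}$ at places outside $\Sigma$ that you invoke). One minor point: the Grunwald--Wang detour is unnecessary, since for any finite set $S$ of places the map $K^\times/(K^\times)^2 \to \prod_{v\in S} K_v^\times/(K_v^\times)^2$ is already surjective by weak approximation (squares form an open subgroup of each $K_v^\times$), so a global quadratic character with the prescribed local components on $\Sigma$ always exists without auxiliary ramification.
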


\begin{proof}
This follows directly from \cite[Proposition 6.2]{kmr}.
\end{proof}

The next corollary comes close to showing that hypothesis (ii) of Theorem \ref{comp} is 
a necessary condition.

\begin{cor}
\label{four}
Suppose $E_1$ and $E_2$ are 2-Selmer parity companions, and $\l$ is a prime of $K$ not dividing $2$.  
Then: 
\begin{enumerate}
\item
$E_1$ has split multiplicative reduction at $\l$ if and only if $E_2$ does,
\item
$E_1$ has potentially multiplicative reduction at $\l$ if and only if $E_2$ does.
\end{enumerate}
In particular if $S_i$ is the set of primes where $E_i$ has potentially multiplicative 
reduction as in Theorem \ref{comp}, and $\Sigma_2 := \{\l : \l \mid 2\}$, 
then $S_1 \cup \Sigma_2 = S_2 \cup \Sigma_2$.
\end{cor}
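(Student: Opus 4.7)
The plan is to extract a local constraint at $\l$ from the global $2$-Selmer parity companionship by invoking Theorem \ref{one}, and then to show that this local constraint forces $E_1$ and $E_2$ to share split multiplicative and potentially multiplicative reduction status at $\l$.

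First, I would choose $\Sigma$ to be any finite set of places of $K$ containing all archimedean places, all places dividing $2$, all primes of bad reduction for $E_1$ or $E_2$, and the prime $\l$ itself (there is no harm in enlarging $\Sigma$ to include $\l$). Since $E_1$ and $E_2$ are $2$-Selmer parity companions by hypothesis, Theorem \ref{one} asserts in the direction we need that $\omega_{E_1,v} = \omega_{E_2,v}$ for every $v \in \Sigma$. In particular, $\omega_{E_1,\l} = \omega_{E_2,\l}$ as set-theoretic maps $\Xset(K_\l) \to \{\pm 1\}$.

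Next, I would read off the explicit description of $\omega_{E,\l}$ for odd primes $\l$ given in \cite{kmr}. Since $\omega_{E,\l}$ is built from the local Kummer image $\image(\kappa_{E/K_\l})$ and its behavior under twisting, its values on the three nontrivial quadratic characters of $K_\l$ (one unramified, two ramified) separate cleanly by reduction type: good reduction, additive potentially good, split multiplicative, non-split multiplicative, and additive potentially multiplicative. Two observations do the work: (a) there is a character $\chi \in \Xset(K_\l)$ for which $\omega_{E,\l}(\chi) = -1$ precisely when $E$ has potentially multiplicative reduction at $\l$; (b) among curves with potentially multiplicative reduction at $\l$, evaluation of $\omega_{E,\l}$ on the unramified quadratic character detects split versus non-split multiplicative reduction, since twisting a split multiplicative curve by the unramified quadratic character produces a non-split multiplicative curve (and vice versa), and this swap is reflected in the parity behavior encoded by $\omega$.

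From $\omega_{E_1,\l} = \omega_{E_2,\l}$, parts (i) and (ii) of the corollary follow from (b) and (a) respectively. For the final clause, part (ii) gives $\l \in S_1 \iff \l \in S_2$ for every prime $\l \notin \Sigma_2$, so $S_1 \setminus \Sigma_2 = S_2 \setminus \Sigma_2$, and hence $S_1 \cup \Sigma_2 = S_2 \cup \Sigma_2$. The main obstacle is the case-by-case analysis needed to verify (a) and (b); I would carry this out by applying the formula of \cite[Definition 6.1]{kmr} to the explicit descriptions of $\image(\kappa_{E/K_\l})$ in Lemmas \ref{12.2}, \ref{3.4} and \ref{3.6}, being careful to distinguish the split versus non-split multiplicative cases and the additive potentially multiplicative case from the additive potentially good case.
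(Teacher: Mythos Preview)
Your proposal is correct and follows essentially the same route as the paper: obtain $\omega_{E_1,\l} = \omega_{E_2,\l}$ from Theorem \ref{one}, then read off the reduction type from $\omega_{E,\l}$. The only difference is that the paper cites \cite[Proposition 6.9]{kmr} directly for this last step, whereas you propose to reprove that proposition by hand from \cite[Definition 6.1]{kmr} and the local Kummer computations in the present paper; either way the content is the same.
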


\begin{proof}
By Theorem \ref{one}, we have $\omega_{E_1,\l} = \omega_{E_2,\l}$.  
Both assertions now follow directly from \cite[Proposition 6.9]{kmr}.  
\end{proof}

\begin{cor}
Suppose $E_1$ and $E_2$ are 2-Selmer parity companions, and $E_1[4] \cong E_2[4]$ 
as $G_K$-modules.  
If $\l$ is a prime of $K$ not dividing $2$, then 
$E_1$ has good (resp., additive, resp., multiplicative) reduction at $\l$ if and only if $E_2$ does.
\end{cor}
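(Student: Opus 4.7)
My plan is to split into two cases according to whether the curves are potentially multiplicative or potentially good at $\l$, and in each case to extract the finer reduction type from the hypothesis $E_1[4]\cong E_2[4]$.

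First I would invoke Corollary \ref{four}(ii): since $E_1$ and $E_2$ are $2$-Selmer parity companions and $\l\nmid 2$, either both curves have potentially multiplicative reduction at $\l$ or both have potentially good reduction. In the potentially good case, multiplicative reduction is automatically excluded, and in the potentially multiplicative case good reduction is excluded, so it suffices to match ``good vs.\ additive'' in the first case and ``multiplicative vs.\ additive'' in the second.

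In the potentially good case, I would apply Lemma \ref{samered} with $m=4$, valid because $4\ge 3$ and the residue characteristic of $K_\l$ does not divide $4$. This yields that $E_1$ and $E_2$ both have good reduction or both have bad reduction at $\l$; since bad reduction in the potentially good setting is necessarily additive, this gives ``good iff good, additive iff additive.''

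In the potentially multiplicative case, the remaining distinction is governed by the splitting character $\psi_i:=\psi_{E_i/K_\l}$: the curve $E_i$ has multiplicative reduction iff $\psi_i$ is unramified, and additive iff $\psi_i$ is ramified. I would use Tate's uniformization (Proposition \ref{tateunif}) to observe that $E_i[2]$ sits in an exact sequence of $G_{K_\l}$-modules
$$
0 \too \bmu_2 \otimes \psi_i \too E_i[2] \too \Z/2\Z \otimes \psi_i \too 0,
$$
and since $\l\nmid 2$ the mod $2$ cyclotomic character is trivial on $G_{K_\l}$, so both composition factors are isomorphic to the character $\psi_i$. Hence the semisimplification of $E_i[2]$ is $\psi_i\oplus\psi_i$. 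The $G_K$-isomorphism $E_1[4]\cong E_2[4]$ restricts on $2$-torsion to a $G_{K_\l}$-isomorphism $E_1[2]\cong E_2[2]$, and the resulting isomorphism $\psi_1\oplus\psi_1\cong\psi_2\oplus\psi_2$ of $\Ftwo[G_{K_\l}]$-modules forces $\psi_1=\psi_2$ by comparing Jordan--H\"older multisets. The reduction type then matches.

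The argument is largely formal; the only mild care point is verifying that both composition factors of $E_i[2]$ are the single character $\psi_i$ (rather than $\epsilon_2\psi_i$ and $\psi_i$ separately), which uses triviality of the mod~$2$ cyclotomic character on $G_{K_\l}$. I do not anticipate any serious obstacle.
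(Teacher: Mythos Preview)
Your proof is correct and tracks the paper's argument closely: both use Corollary~\ref{four} to split into the potentially good and potentially multiplicative cases, and both invoke Lemma~\ref{samered} (with $m=4$) for the former. The only substantive difference is in the potentially multiplicative case. The paper simply cites Lemma~\ref{3.7}(i) to conclude $\psi_1=\psi_2$, but that lemma is stated under the extra hypothesis that the given isomorphism carries $\cC_{E_1/K_\l}[m]$ to $\cC_{E_2/K_\l}[m]$, which is not part of the present Corollary's assumptions. Your direct computation of the semisimplification of $E_i[2]$ as $\psi_i\oplus\psi_i$ sidesteps this and yields $\psi_1=\psi_2$ cleanly, which is arguably a tidier justification. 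One cosmetic remark: the mod~$2$ cyclotomic character $\epsilon_2$ lands in $(\Z/2\Z)^\times=\{1\}$ and is therefore trivial on all of $G_K$, not merely on $G_{K_\l}$ for $\l\nmid 2$; so your ``mild care point'' is in fact vacuous.
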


\begin{proof}
Combining Lemma \ref{samered} and Corollary \ref{four}, we get that
\begin{itemize}
\item
$E_1$ has good reduction at $\l$ if and only if $E_2$ does,
\item
$E_1$ has additive, potentially good reduction at $\l$ if and only if $E_2$ does,
\item
$E_1$ has potentially multiplicative reduction at $\l$ if and only if $E_2$ does.
\end{itemize}
In the last case, Lemma \ref{3.7} shows that $E_1$ and $E_2$ have the same splitting character over $K_\l$, 
so $E_1$ has multiplicative reduction at $\l$ if and only if $E_2$ does.  
This completes the proof.
\end{proof}

\subsection{Selmer near-companions}

\begin{defn}
\label{ncdef}
We will say that two elliptic curves $E_1, E_2$ over $K$ are {\em $n$-Selmer near-companions} if 
there is a constant $C = C(E_1,E_2,K)$ such that 
for every $\chi \in \Xset(K)$ there is an abelian group $A_\chi$ and homomorphisms 
$\Sel_n(E_1^\chi/K) \to A_\chi$ and $\Sel_n(E_2^\chi/K) \to A_\chi$ with kernel and cokernel 
of order at most $C$.
\end{defn}

Note that $E_1$ and $E_2$ are $n$-Selmer companions if and only if Definition \ref{ncdef}
is satisfied with $C = 1$.

\begin{thm}
\label{ncomp}
Suppose $E_1$ and $E_2$ are elliptic curves over $K$.  
Let $m = p^{k+1}$ if $p \le 3$, and $m = p^k$ if $p>3$.  
Suppose further that there is a $G_K$-isomorphism $E_1[m] \cong E_2[m]$.  
Then $E_1$ and $E_2$ are $p^k$-Selmer near-companions over every finite extension $F$ of $K$.  
\end{thm}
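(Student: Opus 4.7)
The plan is to compare both $p^k$-Selmer groups to a common ``$\Sigma$-relaxed'' Selmer group $A_\chi$ obtained by discarding the local conditions at a finite set $\Sigma$ of places of $F$ that depends only on $E_1$, $E_2$, $F$, and $p^k$. Concretely, take
$$
\Sigma = \{v \mid p\infty\} \cup \{v : E_1 \text{ or } E_2 \text{ has bad reduction at } v\},
$$
fix a $G_K$-isomorphism $\lambda : E_1[m] \isom E_2[m]$, and use it (restricted to $p^k$-torsion and twisted by $\chi$) to identify $E_1^\chi[p^k]$ with $E_2^\chi[p^k]$ as $G_F$-modules. Then let $A_\chi \subset H^1(F, E_1^\chi[p^k]) = H^1(F, E_2^\chi[p^k])$ be the kernel of
$$
H^1(F, E_1^\chi[p^k]) \too \prod_{v \notin \Sigma} H^1(F_v, E_1^\chi[p^k])/\image(\kappa_{E_1^\chi/F_v}).
$$

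The key verification is that for every $v \notin \Sigma$ and every $\chi$, the two local Kummer images coincide, so that $A_\chi$ is equally the analogous kernel defined with $E_2^\chi$ and contains $\Sel_{p^k}(E_i^\chi/F)$ for $i = 1, 2$. Since $v \nmid p\infty$ and both $E_i$ have good reduction at $v$, two subcases arise: if $\chi$ is unramified at $v$, then both $E_i^\chi$ have good reduction at $v$, and Case 3 of the proof of Theorem \ref{comp2} identifies each local image with the unramified subgroup of $H^1(F_v, E_i^\chi[p^k])$; if $\chi$ is ramified at $v$, then both $E_i^\chi$ acquire bad, potentially good reduction (Lemma \ref{samered} applies to the twisted modules, since $v \nmid m$), and Case 2 shows the local image is completely determined by $E_i^\chi[m]$, which is the same for $i = 1, 2$ via $\lambda$.

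It remains to bound the cokernel of the inclusion $\Sel_{p^k}(E_i^\chi/F) \hookrightarrow A_\chi$, which injects into $\bigoplus_{v \in \Sigma} H^1(F_v, E_i^\chi[p^k])/\image(\kappa_{E_i^\chi/F_v})$. Since $E_i^\chi[p^k]$ has order $p^{2k}$ regardless of $\chi$, the local Euler--Poincar\'e formula together with Tate local duality (and a trivial bound at archimedean places) gives $|H^1(F_v, E_i^\chi[p^k])| \le C_v$ for a constant $C_v$ depending only on $v$ and $p^k$. The constant $C = \prod_{v \in \Sigma} C_v$ then provides the uniform bound on cokernels required by Definition \ref{ncdef} (with trivial kernels). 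The hard part is the bookkeeping at ramified $\chi$ in the verification above; the uniform local bound is a standard application of local duality, and everything else is formal manipulation of the Selmer group definitions.
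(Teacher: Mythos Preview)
Your proof is correct and follows essentially the same approach as the paper: define the $\Sigma$-relaxed Selmer group as $A_\chi$, invoke Cases 2 and 3 of the proof of Theorem \ref{comp2} to match the local conditions outside $\Sigma$, and bound the cokernel of $\Sel_{p^k}(E_i^\chi/F) \hookrightarrow A_\chi$ by $\prod_{v\in\Sigma}|H^1(F_v,E_i^\chi[p^k])|$. Your treatment is slightly more explicit than the paper's in splitting the $v\notin\Sigma$ analysis into the unramified-$\chi$ and ramified-$\chi$ subcases and in citing the local Euler characteristic for the uniform bound, but the argument is the same (your parenthetical appeal to Lemma \ref{samered} is unnecessary, since a ramified twist of a curve with good reduction at $v\nmid 2$ visibly has bad, potentially good reduction).
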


\begin{proof}
If $E$ is an elliptic curve over $K$ and $\Sigma$ is a finite set of places of $K$, define
$$
\Sel_{p^k}^\Sigma(E/K) := 
   \ker\bigl(H^1(K,E[p^k]) \too \dirsum{v\notin\Sigma}H^1(K_v,E[p^k])/\image(\kappa_{E/K_v})\bigr).
$$
Then the definitions yield an exact sequence
\begin{equation}
\label{achi}
0 \too \Sel_{p^k}(E/K) \too \Sel_{p^k}^\Sigma(E/K) 
   \too \dirsum{v\in\Sigma}H^1(K_v,E[p^k])/\image(\kappa_{E/K_v}).
\end{equation}

Now let $\Sigma$ be the finite set of places of $K$ dividing $n \Delta_1\Delta_2\infty$, where $\Delta_i$ 
is the discriminant of $E_i$.  
Using the given $G_K$-isomorphism to identify $E_1[p^k]$ with $E_2[p^k]$, Cases 2 and 3 of the proof of 
Theorem \ref{comp2} show that if $v \notin \Sigma$, then 
$\image(\kappa_{E_1^\chi/K_v}) = \image(\kappa_{E_2^\chi/K_v})$ for every $\chi \in \Xset(K)$.  
Therefore $\Sel_{p^k}^\Sigma(E_1^\chi/K) = \Sel_{p^k}^\Sigma(E_2^\chi/K)$ inside 
$H^1(K,E_1[p^k]) = H^2(K,E[p^k])$, and we define $A_\chi$ to be this common group.
Now if we let 
$$
C := \prod_{v \in \Sigma} \sup_{\chi\in\Xset(K_v)}|H^1(K_v,E_1^\chi[p^k])| 
$$
then \eqref{achi} applied 
to $E_1^\chi$ and $E_2^\chi$ shows that $E_1$ and $E_2$ are $p^k$-Selmer near-compan\-ions over $K$.
The same proof applies if we replace $K$ by any finite extension.
\end{proof}

\begin{exa}
Take $K = \Q$ and $p^k = 2$, and let $E_1$ and $E_2$ be the elliptic curves  
$26A1$ and $598B1$ in \cite{cremona}:
\begin{align*}
E_1 :& ~y^2 + xy + y = x^3 - 5x - 8, \\
E_2 :& ~y^2 + xy = x^3 - x^2 + 44x + 496.
\end{align*}
The method of \S\ref{method1} below allows us to verify that there is a $G_\Q$-isomorphism 
$E_1[4] \cong E_2[4]$, so $E_1$ and $E_2$ are $2$-Selmer near-companions by Theorem \ref{ncomp}.
Using a little more care, we can show that $\Sel_{p^k}^\Sigma(E_1^\chi/K) = \Sel_{p^k}^\Sigma(E_2^\chi/K)$ 
for every $\chi$, with $\Sigma = \{23\}$.  
In the exact sequences \eqref{achi} for $E_1^\chi$ and $E_2^\chi$ with this $\Sigma$, 
the right hand group has order
$[E_1(\Q_{23}):2 E_1(\Q_{23})] = 2$, independent of $\chi$.  It follows that for every quadratic character 
$\chi$ of $G_\Q$ we have 
$$
\dim_{\F_2}\Sel_2(E_1^\chi/\Q) - \dim_{\F_2}\Sel_2(E_2^\chi/\Q) \in \{-1,0,1\}.
$$
All three values $-1,0,1$ occur.
(Note that $E_1$ has good reduction at $23$ and $E_2$ has multiplicative reduction, so 
hypothesis (ii) of Theorem \ref{comp} does not hold.)
\end{exa}

It is natural to make the following conjecture.

\begin{conj}
If $n \ge 1$ and $E_1, E_2$ are $n$-Selmer near-companions over $K$, then $E_1[n]$ is 
$G_K$-isomorphic to $E_2[n]$.
\end{conj}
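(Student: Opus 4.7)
The plan is to argue by contrapositive: assuming $E_1[n] \not\cong E_2[n]$ as $G_K$-modules, I would construct a sequence of quadratic characters $\chi$ along which $\bigl|\dim_{\F_p}\Sel_n(E_1^\chi/K)-\dim_{\F_p}\Sel_n(E_2^\chi/K)\bigr|$ grows without bound, contradicting the constant $C$ in Definition \ref{ncdef}. Since near-companionship does not obviously decompose along the prime-power factorization of $n$, the first step is to show it suffices to treat each prime-power factor $p^k \mid n$ separately, using that kernels and cokernels of the natural maps $\Sel_n \to \Sel_{p^k}$ enter only as a uniform overall factor into the near-companion constant. From this point on, assume $n = p^k$.

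The second step is to pin down bad reduction and reduction types. The 2-Selmer parity analysis leading to Corollary \ref{four} should admit an analogue for general $p$-Selmer near-companions: by running the local computations of \S5 through a near-companion version of the Cassels/root-number parity arguments of \S\ref{converse}, one expects that outside a finite set $\Sigma$ depending only on $(E_1,E_2,K,p)$, the two curves share the same reduction type, and in particular the unramified local Kummer images agree at every $\l \notin \Sigma$ where $E_1[p^k]\cong E_2[p^k]$ locally.

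The core of the argument is then a Chebotarev amplification. If $E_1[p^k]\not\cong E_2[p^k]$ as $G_K$-modules, then by Chebotarev there is a positive-density set $T$ of primes $\l\notin\Sigma$ where $\Frob_\l$ acts with different characteristic polynomials mod $p^k$ on the two modules; by the analysis of Case 3 in the proof of Theorem \ref{comp2}, the unramified local Kummer images at such $\l$ inside a common Galois-cohomology target are distinct. For $\l \in T$, pass from $\chi$ to $\chi\cdot\chi_\l$, where $\chi_\l$ is a quadratic character ramified precisely at $\l$, and apply a Greenberg--Wiles-style balance formula to show that this twist step changes the difference $\dim\Sel_{p^k}(E_1^\chi/K)-\dim\Sel_{p^k}(E_2^\chi/K)$ in an asymmetric, quantifiable way. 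Iterating over many independent primes in $T$ should accumulate an unbounded Selmer-rank discrepancy between the two curves.

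The main obstacle is that an individual twist step can move the Selmer-rank difference in either direction, so a naive iteration risks systematic cancellation; the existing Markov-chain/disparity analyses of \cite{kmr,klagsbrun,poonenrains} extract a controlled bias only for $p=2$ and only under restrictive torsion hypotheses. Extending the amplification to isolate a robust bias for general odd $p^k$ — in particular, turning a trace-mod-$p^k$ discrepancy into unbounded Selmer-rank divergence, and upgrading the Chebotarev-level conclusion (which controls semisimple isomorphism class) to the full $G_K$-module structure in the presence of non-split torsion — is the genuine open content, and is likely to require techniques beyond those developed in this paper.
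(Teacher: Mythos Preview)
The statement you are attempting to prove is labeled a \emph{Conjecture} in the paper, and the paper offers no proof of it; there is nothing to compare your proposal against. Your write-up is candid about this: the final paragraph concedes that the amplification step is ``genuine open content'' requiring ``techniques beyond those developed in this paper.'' So what you have submitted is a heuristic programme, not a proof.

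Beyond that honest concession, there are structural issues in the sketch itself. Your Chebotarev step produces primes $\l$ where $\Frob_\l$ has different characteristic polynomial on $E_1[p^k]$ and $E_2[p^k]$, and you then speak of the ``unramified local Kummer images at such $\l$ inside a common Galois-cohomology target.'' But there \emph{is} no common target: the whole machinery of Cases~2 and~3 in the proof of Theorem~\ref{comp2} rests on first fixing a $G_K$-isomorphism $E_1[p^k]\cong E_2[p^k]$ and then comparing two subgroups of the same $H^1(K_v,E[p^k])$. In the contrapositive direction you are assuming no such isomorphism exists, so the local comparison you invoke is not even well-posed; you would need a genuinely different device to convert a local trace discrepancy into a Selmer-rank discrepancy. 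Relatedly, Chebotarev only controls the semisimplification of the representations, so even if your amplification worked it would at best prove $E_1[n]^{\mathrm{ss}}\cong E_2[n]^{\mathrm{ss}}$, which you also note. Finally, the reduction to prime powers in your first step is not free: the definition of near-companion allows $A_\chi$ to vary arbitrarily with $\chi$, and it is not clear that a bound on the $n$-Selmer discrepancy propagates to each $p^k$-Selmer discrepancy with a uniform constant.
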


\appendix
\section{Checking that $E_1[m] \cong E_2[m]$}
\label{sm8pf}

Suppose $E_1$ and $E_2$ are elliptic curves over $K$.  
In this appendix we discuss two methods for verifying that there is a $G_K$-isomorphism 
$E_1[m] \cong E_2[m]$.

\subsection{Universal families}
\label{method1}

If $m = 3$, $4$, or $5$ 
then \cite[Theorem 4.1]{modp}, \cite[Theorem 4.1]{mod4}, and \cite[Theorem 5.1]{modp}, respectively, 
give explicit models
\begin{equation}
\label{family}
\E_t : y^2 = x^3 + a(t)x + b(t), \qquad a(t), b(t) \in K[t]
\end{equation}
for the family of all elliptic curves $E/K$ 
with $E[m]$ symplectically $G_K$-isomorphic to $E_1[m]$.  
(A symplectic isomorphism is one that preserves the Weil pairings.)
In other words, there is a symplectic $G_K$-isomorphism $E_1[m] \cong E_2[m]$  
if and only if there is an $s \in K$ such that the specialization $\E_s$ is isomorphic over 
$K$ to $E_2$.

To test this, we can simply compute the $j$-invariant of $\E_t$
$$
\J(t) := 1728 \frac{a(t)^3}{4a(t)^3+27 b(t)^2} \in K(t),
$$
and then find (the finite set of) 
all zeros in $K$ of the rational function $\J(t) - j(E_2)$.  
For each zero $s \in K$, we have that $\E_s$ is an elliptic curve over $K$ 
with $j(\E_s) = \J(s) = j(E_2)$, and it is then a simple matter to 
test whether $\E_s$ is isomorphic to $E_2$ over $K$.  If it is, then 
$E_1[m]$ is (symplectically) $G_K$-isomorphic to $E_2[m]$.  But if for every zero $s \in K$ of 
$\J(t) - j(E_2)$ we have that $\E_s$ is not isomorphic over $K$ to $E_2$, then there is 
no symplectic $G_K$-isomorphism $E_1[m] \cong E_2[m]$.

In the case $m=4$, one can show that there is a 
$G_K$-isomorphism $E_1[4] \cong E_1^{\Delta}[4]$ that is {\em not} symplectic, where $E_1^\Delta$ is the quadratic twist of 
$E_1$ by its discriminant $\Delta$.  It follows that if $E_2[4]$ is $G_K$-isomorphic to 
$E_1[4]$, then $E_2[4]$ is symplectically $G_K$-isomorphic either to 
$E_1[4]$ or to $E_1^{\Delta}[4]$, so we can use the argument above to test for all 
$G_K$-isomorphisms, not just the symplectic ones.

\begin{proof}[Proof of Proposition \ref{samemod5}]
Let $E_1$ and $E_2$ be as in Example \ref{ex11}.  
Then $E_1$ has the short Weierstrass model $y^2 = x^3 - 5616x - 494208$.
Applying \cite[Theorem 5.1]{modp} with $a = -5616$ and $b = -494208$ gives an explicit 
model of the elliptic curve $\E_t/\Q(t)$ of \eqref{family}.  
If we set $J(t) := j(\E_t)$ then we compute that the only rational zero of 
$J(t) - j(E_2)$ is $-9/22$.  Specializing $\E_t$ at 
$t = -9/22$ gives a Weierstrass model of an elliptic curve isomorphic over $\Q$ to $E_2$.
Thus there is a (symplectic) $G_\Q$-isomorphism $E_1[5] \cong E_2[5]$.
\end{proof}

\subsection{Constructing an explicit isomorphism}
\label{method2}
Suppose $m = p^n$.  Let $f_i(x) \in K[x]$ be the polynomial of degree $d := (p^{2n} - p^{2n-2})/2$ 
whose roots are the $x$-coordinates of the points in $E_i[p^n]-E_i[p^{n-1}]$.
Suppose further that $f_1$ and $f_2$ are irreducible; this is equivalent to requiring that 
$G_K$ acts transitively on $E_i[p^n]/\{\pm 1\}$ for $i = 1, 2$.

Fix a root $\alpha_1$ of $f_1$.  Suppose that $f_2$ also has at 
least one root $\alpha_2 \in K(\alpha_1)$.  (If $E_1[p^n] \cong E_2[p^n]$, then, this will necessarily 
be the case.)  
Fix such a root and call it $\alpha_2$.  Then 
$K(\alpha_2) = K(\alpha_1)$, so there is a unique polynomial $\phi(x) \in K[x]$ 
of degree less than $d$ such that $\phi(\alpha_1) = \alpha_2$.
Since $\alpha_1$ is a root of $f_2 \circ \phi$, we have that $f_1$ divides $f_2 \circ \phi$, 
so $\phi$ maps all roots of $f_1$ to roots of $f_2$.

Fix a prime $\l$ of $K$ such that $E_1$ and $E_2$ have good reduction at $\l$, and 
$\l$ splits completely in $K(E_1[p^n])/K$.  Fix a prime of $K(E_1[p^n])$ above $\l$.  
If $\F_\l$ denotes the residue field of $\l$, this choice gives us a reduction isomorphism 
$$
\pi : E_1(\bar{K})[p^n] \isom E_1(\F_\l)[p^n].
$$
Note that $\phi$ also maps all roots of $f_1$ in $\F_\l$ to roots of $f_2$ in $\F_l$.  

Fix a basis $P_1, P_2$ of $E_1(\F_\l)[p^n]$.  Let $Q_1, Q_2 \in E_1(\F_\l)[p^n]$ 
be points such that $x(Q_i) = \phi(x(P_i))$.  Define a group homomorphism 
$\varphi : E_1(\F_\l)[p^n] \to E_2(\F_\l)[p^n]$ by 
$$
\text{$\varphi(a P_1 + b P_2) = a Q_1 + b Q_2$ \quad for $a, b \in \Z/p^n\Z$}.
$$
Using the reduction isomorphism $\pi$, we can lift $\varphi$ to a group homomorphism 
$E_1[p^n] \to E_2[p^n]$, which we also denote by $\varphi$.  
Consider the diagram
\begin{equation}
\label{five}
\raisebox{60pt}{\xymatrix@C=20pt{
E_1[p^n] - E_1[p^{n-1}] \ar^{\varphi}[r]\ar_{\pi}^{\cong}[d] & E_2[p^n] - E_2[p^{n-1}]\ar_{\pi}^{\cong}[d] \\
E_1(\F_\l)[p^n] - E_1(\F_\l)[p^{n-1}] \ar^{\varphi}[r]\ar[d] & E_2(\F_\l)[p^n] - E_2(\F_\l)[p^{n-1}]\ar[d] \\
(E_1(\F_\l)[p^n] - E_1(\F_\l)[p^{n-1}])/\{\pm1\} \ar^{[\phi]}[r] \ar_{\pi^{-1}}^{\cong}[d] 
   & (E_2(\F_\l)[p^n] - E_2(\F_\l)[p^{n-1}])/\{\pm1\} \ar_{\pi^{-1}}^{\cong}[d] \\
(E_1[p^n] - E_1[p^{n-1}])/\{\pm1\} \ar^{[\phi]}[r] & (E_2[p^n] - E_2[p^{n-1}])/\{\pm1\}
}}
\end{equation}
where $[\phi]$ denotes the map induced by applying the polynomial $\phi$ to the $x$-coordi\-nates.  
The upper and lower squares are commutative, with vertical isomorphisms. 

Let $S$ be the set of places of $K$ where at least one of $E_1[p^n], E_2[p^n]$ is ramified, 
and $\Xset(K,S) \subset \Xset(K)$ the (finite) subgroup of characters unramified outside of $S$. 

\begin{lem}
\label{A.1}
Suppose that the center square of \eqref{five} is commutative.  Then: 
\begin{enumerate}
\item
$\varphi$ is a group isomorphism,
\item
there is a quadratic character $\psi\in\Xset(K,S)$ such that
$\varphi(P^\sigma) = \psi(\sigma) \varphi(P)^\sigma$ for every $P \in E_1[p^n]$ and $\sigma \in G_K$.
\end{enumerate}
\end{lem}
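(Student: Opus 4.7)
The plan is to deduce both statements from the commutativity of the outer rectangle of \eqref{five}: the upper and lower squares commute by construction (since $\varphi$ on $\bar K$-level is defined by lifting through $\pi$, and $\phi\in K[x]$ commutes with reduction), the center square commutes by hypothesis, and stacking all three gives the commutative relation $q_2\circ\varphi = [\phi]\circ q_1$ on $E_1[p^n]-E_1[p^{n-1}]$, where $q_i$ denotes the quotient map by $\{\pm 1\}$.

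For (i), the commutativity of the center square already tells us that $\varphi$ carries $E_1(\F_\l)[p^n]-E_1(\F_\l)[p^{n-1}]$ into $E_2(\F_\l)[p^n]-E_2(\F_\l)[p^{n-1}]$; in other words, each element of exact order $p^n$ is sent to one of exact order $p^n$. Applied to the basis $P_1, P_2$ this yields that $Q_1, Q_2$ both have order $p^n$, and a putative dependence $Q_2 = cQ_1$ would make $P_2 - cP_1$ a nonzero element of $\ker(\varphi)$ of exact order $p^n$, contradicting what we just observed. Hence $Q_1, Q_2$ form a basis of $E_2(\F_\l)[p^n]$ and $\varphi$ is a group isomorphism at the residue-field level; transporting along the reduction isomorphism $\pi$ gives (i).

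For (ii), since $\phi\in K[x]$ the map $[\phi]$ is $G_K$-equivariant, as are the quotient maps $q_i$, so the outer rectangle forces, for every $P$ of exact order $p^n$ and every $\sigma\in G_K$,
$$
q_2(\varphi(P^\sigma)) = [\phi](q_1(P)^\sigma) = [\phi](q_1(P))^\sigma = q_2(\varphi(P)^\sigma),
$$
i.e., $\varphi(P^\sigma) = \psi_P(\sigma)\varphi(P)^\sigma$ for a sign $\psi_P(\sigma)\in\{\pm 1\}$. The main obstacle is to show that $\psi_P(\sigma)$ is independent of $P$. I would fix a $\Z/p^n\Z$-basis $P_1, P_2$ of $E_1[p^n]$ (whose images under $\varphi$ form a basis of $E_2[p^n]$ by (i)) and, assuming $p^n > 2$, argue that if $\psi_{P_1}(\sigma)\ne\psi_{P_2}(\sigma)$ then evaluating the identity at $P_1+P_2$, which also has exact order $p^n$, forces $2\varphi(P_j)^\sigma = 0$ for $j = 1$ or $j = 2$, impossible since $\varphi(P_j)$ has order $p^n$. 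The common value $\psi(\sigma) := \psi_{P_1}(\sigma) = \psi_{P_2}(\sigma)$ then extends by $\Z$-linearity of $\varphi$ to the identity $\varphi(P^\sigma) = \psi(\sigma)\varphi(P)^\sigma$ for every $P\in E_1[p^n]$, and the standard cocycle calculation $\psi(\sigma\tau)\varphi(P)^{\sigma\tau} = \varphi(P^{\sigma\tau}) = \psi(\sigma)\psi(\tau)\varphi(P)^{\sigma\tau}$ shows $\psi$ is a homomorphism into $\{\pm 1\}$. It is manifestly trivial on the absolute Galois group of $K(E_1[p^n], E_2[p^n])$, hence lies in $\Xset(K,S)$.
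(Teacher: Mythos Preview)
Your argument is essentially correct and the overall strategy is sound, but there is one small imprecision in part (i), and your route differs from the paper's in both parts.

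For (i), you correctly deduce from the commutativity of the center square that $\varphi$ sends elements of exact order $p^n$ to elements of exact order $p^n$. This observation already gives injectivity directly: if $\varphi$ had nontrivial kernel, any nonzero kernel element could be written as $p^j R$ with $R$ of exact order $p^n$ and $j<n$, forcing $\varphi(R)$ to have order at most $p^j<p^n$. Your phrasing via ``a putative dependence $Q_2=cQ_1$'' does not quite cover all ways $Q_1,Q_2$ can fail to be a basis when $n>1$ (for instance $Q_1=(1,0)$, $Q_2=(1,p)$ in $(\Z/p^2\Z)^2$), but the fix is immediate: one only needs $Q_2-cQ_1$ to have order strictly less than $p^n$ for some $c$, and then the same contradiction applies. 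The paper instead argues surjectivity: it uses the irreducibility of $f_2$ to get that $G_K$ acts transitively on the bottom-right set, so $[\phi]$ is surjective there, hence $\varphi$ is surjective on top. Your injectivity route avoids invoking the irreducibility of $f_2$ at this step.

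For (ii), your derivation of $\varphi(P^\sigma)=\pm\varphi(P)^\sigma$ for $P$ of exact order $p^n$ matches the paper's. To show the sign is independent of $P$, you do a direct basis computation, evaluating at $P_1+P_2$ and using $p^n>2$. The paper instead observes that for each fixed $\sigma$ the two sets
\[
\{P:\varphi(P^\sigma)=\varphi(P)^\sigma\},\qquad \{P:\varphi(P^\sigma)=-\varphi(P)^\sigma\}
\]
are subgroups of $E_1[p^n]$ whose union is all of $E_1[p^n]$, and then uses the fact that a group is never the union of two proper subgroups. Both arguments are short; the paper's is perhaps slicker, while yours makes the role of $p^n>2$ explicit. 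Your justification that $\psi\in\Xset(K,S)$, via triviality on $G_{K(E_1[p^n],E_2[p^n])}$, is the same as the paper's.
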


\begin{proof}
If the center square of \eqref{five} is commutative, then the entire diagram is commutative.
Since $\phi \in K[x]$, the bottom map $[\phi]$ is $G_K$-equivariant.  
Since $f_2$ is irreducible, $G_K$ acts transitively on $(E_2[p^n] - E_2[p^{n-1}])/\{\pm1\}$. 
Hence the bottom map $[\phi]$ is surjective, so the top map $\phi$ is surjective, and (i) follows.  

It also follows from the commutativity of \eqref{five} and the $G_K$-equivariance of $[\phi]$ that 
\begin{equation}
\label{seven}
\text{$\varphi(P^\sigma) = \pm \varphi(P)^\sigma$ \quad for every $P \in E_1[p^n]$ and $\sigma \in G_K$.}
\end{equation}
For every $\sigma \in G_K$, the sets 
$$
\{P \in E_1[p^n] : \varphi(P^\sigma) = \varphi(P)^\sigma\}, \quad
   \{P \in E_1[p^n] : \varphi(P^\sigma) = -\varphi(P)^\sigma\}
$$
are subgroups of $E[p^n]$.  It follows from \eqref{seven} that the union of these two subgroups is 
$E[p^n]$, and therefore one of them must be all of $E[p^n]$.  Thus for every $\sigma$ we can define 
$\psi(\sigma) = \pm1$ so that $\varphi(P^\sigma) = \psi(\sigma) \varphi(P)^\sigma$ for every $P \in E_1[p^n]$.
One sees easily that $\psi\in\Xset(K)$, and $\psi$ is necessarily unramified outside $S$, so $\psi\in\Xset(K,S)$.
\end{proof}

Suppose now that the center square of \eqref{five} is commutative.  Let $\psi \in \Xset(K,S)$ 
be the character of Lemma \ref{A.1}(ii).  Then by Lemma \ref{A.1}, 
$\varphi$ induces an isomorphism $E_1[p^n] \isom E_2^\psi[p^n]$, 
where $E_2^\psi$ is the quadratic twist of $E_2$ by $\psi$.  
We would like to verify that $\psi$ must be the trivial character.

Fix a basis $\chi_1, \ldots, \chi_t$ of the $\F_2$-vector space $\Xset(K,S)$.  
Suppose that for every $i$, $1 \le i \le t$, we can find a prime $\q_i$ of $K$, 
$\q_i \notin S$, such that 
\begin{itemize}
\item
$\chi_i(\q_i) = -1$, 
$\chi_j(\q_i) = 1$ if $j \ne i$, 
\item
the traces of Frobenius of $\q_i$ on $E_1[p^n]$ and $E_2[p^n]$ satisfy
$$
\Tr(\Frob_{\q_i} | E_1[p^n]) \ne -\Tr(\Frob_{\q_i} | E_2[p^n]).
$$
\end{itemize}
Choose a nontrivial character $\chi = \prod_i \chi_i^{a_i} \in \Xset(K,S)$, $a_i \in \{0,1\}$.  
If $a_i \ne 0$ for some $i$, then by our choice of $\q_i$ we have  
$\Tr(\Frob_{\q_i} | E_1[p^n]) \ne \Tr(\Frob_{\q_i} | E_2^\chi[p^n])$, 
so $E_1[p^n] \not\cong E_2^\chi[p^n]$.  Hence $\psi$ must be the trivial character.

All of the steps above can be handled by either Sage \cite{sage} or PARI/GP \cite{pari}.  
Computing the polynomial $\phi(x)$ is the only significantly time-consuming step.  Finding a prime 
$\l$, checking the commutativity of \eqref{five}, and finding primes $\q_i$ with appropriate 
traces of Frobenius is very quick.  (Note that the points $Q_1, Q_2 \in E_1(\F_\l)[p^n]$ are only 
defined up to multiplication by $\pm1$.  If the first choice does not lead to commutativity 
in \eqref{five}, then replacing $Q_1$ by $-Q_1$ may still work.)

\begin{proof}[Proof of Proposition \ref{samemod8}]
Let $E_1$ and $E_2$ be as in Example \ref{ex10}, both of conductor $1242 = 2 \cdot 3^3 \cdot 23$, 
and $m = 8$.  
PARI/GP computes the polynomial $\phi(x)$ of degree $(8^2-4^2)/2 = 24$ in less than a minute 
on a modern desktop computer.  
We take $\l := 19681$, and 
$$
P_1 := (731,4673), P_2 := (3074,1044) \in E_1(\F_{19681}).
$$  
Then $P_1, P_2$ generate $E_1[8]$, and we compute that 
$\phi(731) \equiv 10530 \pmod{19681}$ and  $\phi(3074) \equiv 17962 \pmod{19681}$.  
We define $\varphi$ as above using the points 
$$
Q_1 = (10530,9277), Q_2 = (17962,16270) \in E_2(\F_{19681}),
$$
and we check directly that the center square of \eqref{five} commutes.  Therefore Lemma \ref{A.1}  
shows that $\varphi : E_1[8] \to E_2^\psi[8]$ is a $G_\Q$-equivariant isomorphism, where 
$\psi \in \Xset(\Q,\{\infty, 2, 3, 23\})$.  The group $\Xset(\Q,\{\infty, 2, 3, 23\})$ is generated by 
the characters 
$\chi_{-1}, \chi_2, \chi_{-3}, \chi_{-23}$, where $\chi_d$ is the quadratic character of $\Q(\sqrt{d})/\Q$.
We find the following data, where $a_q(E_i)$ is the trace of Frobenius $\Frob_q$.
$$
\begin{array}{|c|c|c|c|c|c|c|}
\hline
q & \chi_{-1}(q) & \chi_{2}(q) & \chi_{-3}(q) & \chi_{-23}(q) & a_q(E_1) \pmod{8} & a_q(E_2) \pmod{8} \\
\hline
31 & -1 & \phantom{-}1 & \phantom{-}1 & \phantom{-}1 & 2 & 2 \\
\hline
349 & \phantom{-}1 &- 1 & \phantom{-}1 & \phantom{-}1 & 2 & 2 \\
\hline
233 & \phantom{-}1 & \phantom{-}1 & -1 & \phantom{-}1 & 2 & 2 \\
\hline
241 & \phantom{-}1 & \phantom{-}1 & \phantom{-}1 & -1 & 6 & 6 \\
\hline
\end{array}
$$
The argument above shows that $\psi$ is the trivial character, so 
$\varphi : E_1[8] \to E_2[8]$ is a $G_\Q$-equivariant isomorphism.
\end{proof}

\end{document}